\newtheorem{thm}{Theorem}[section]
\newtheorem{lem}[thm]{Lemma}
\newtheorem{prop}[thm]{Proposition}
\theoremstyle{definition}
\newtheorem{defin}[thm]{Definition}
\newtheorem{rem}[thm]{Remark}
\numberwithin{equation}{section}
\newcommand{\diam}{\operatorname{diam}}
\newcommand{\dif}{\,\mathrm{d}}
\newcommand{\charfun}{\ensuremath{\mathbbm 1}} 
\newcommand{\sm}[1]{\ensuremath{#1'}}  
\newcommand{\la}[1]{\ensuremath{#1''}} 
\newcommand{\pp}{\mathrm{p}} 
\begin{document}
\title
{Variation inequalities for smartingales}

\author[M. Passenbrunner]{Markus Passenbrunner}
\address{Institute of Analysis, Johannes Kepler University Linz, Austria, 4040 Linz, Altenberger Strasse 69}
\email{markus.passenbrunner@jku.at, markus.passenbrunner@gmail.com}
\keywords{Spline martingales, variation inequalities, the law of the iterated logarithm}
\subjclass[2020]{42C05, 60G42}

\begin{abstract}
	A result by N.G. Makarov [Algebra i Analiz, 1989] states that for dyadic martingales 
	$(M_n)$ on the torus we have the strict inequality 
	\[
		\liminf_{n\to\infty} \frac{M_n}{\sum_{k=1}^n |\Delta M_k|}	 > 0
	\]
	on a set of Hausdorff dimension one, denoting by $\Delta M_n$ the 
	martingale differences $ \Delta M_n = M_n - M_{n-1} $. We discuss an extension of this 
	inequality to so-called smartingales on convex, compact subsets of $\mathbb R^d$, which  
	are piecewise polynomial (or \emph{s}pline) versions of martingales.  
	As a tool we need and prove an estimate for smartingales in the spirit of 
	the law of the iterated logarithm.
\end{abstract}
\maketitle 

\section{Introduction and Preliminaries}

Let $(M_n,\mathscr F_n)_{n\geq 0}$ be a martingale defined on a probability space $(\Omega,\mathscr F,\mu)$
with $M_0=0$ and $\mathscr F_0 = \{\emptyset,\Omega\}$ and let $\Delta M_n = M_n - M_{n-1}$ for $n\geq 1$.
Define the square function
\[
S_n^2 = \sum_{i=1}^n \mathbb E_{i-1}(\Delta M_i)^2 
\]
and $S_\infty^2 = \sup_n S_n^2 = \lim_n S_n^2$,
denoting by $\mathbb E_n$ the conditional expectation 
with respect to $\mathscr F_n$.
By a result of W. Stout \cite{Stout1970}, martingales enjoy the law of 
the iterated logarithm (LIL). 
In particular, assuming that the differences $\Delta M_n$ of $(M_n)$ are uniformly bounded  
(i.e., there is a constant $L$ such that for all $n$ we have $|\Delta M_n|\leq L$),
we have  $\mu$-a.s. on $\{ S_\infty = \infty\}$
\begin{align*}
	\limsup_{n\to \infty} \frac{M_n}{\sqrt{ 2 S_n^2 \log\log S_n^2}} = 1,\qquad 
\liminf_{n\to \infty} \frac{M_n}{\sqrt{ 2 S_n^2 \log\log S_n^2}} = -1. 
\end{align*}
On the set $\{ S_\infty < \infty\}$, the martingale $(M_n)$ has a finite limit $\mu$-almost surely.
For more information on the LIL and its broad use in analysis, see the monograph \cite{BanuelosMoore1999}
by R.~Ba\~nuelos and C.N.~Moore.

N.G. Makarov \cite[Proposition 6.16]{Makarov1989} showed via change of measure arguments that 
this law of iterated logarithm for martingales implies for dyadic
martingales $(M_n)$  on the torus with uniformly bounded differences the inequality 
\begin{equation}\label{eq:ineq_makarov}
\liminf_{n\to\infty} \frac{M_n}{\sum_{k=1}^n |\Delta M_k|}	 > 0
\end{equation}
on a set of Hausdorff dimension one. 
Thus, there are many points where the martingale $(M_n)$ is comparable to its variation.

We mention here similar results for the radial variation of analytic functions on 
the unit disc. J.~Bourgain \cite{Bourgain1993} showed that for bounded analytic (or harmonic) functions $F$ on the 
unit disc, 
the radial variation
\[
	V(\theta) = \int_0^1 |F'(\rho e^{i\theta})| \dif \rho
\]
is bounded on a set of Hausdorff dimension $1$.
More generally, let $b$ be a  
Bloch function, i.e.,
an analytic function $b$ on the unit disc $\mathbb D$ satisfying 
\[ 
	\sup_{z\in \mathbb D} (1-|z|^2) |b'(z)| < \infty.
\]
Then, based on Bourgain's result,  P.W.~Jones and P.F.X.~M\"uller \cite{JonesMueller1997} showed that  
there exists 
an angle $\theta\in [0,2\pi)$ satisfying 
\[
\liminf_{r\to 1} \frac{\Re b(r e^{i\theta})}{\int_0^r |b'(\rho e^{i\theta})|\dif \rho} > 0,
\]
where $\Re b$ denotes the real part of $b$. This can be seen as
 an analogue of inequality \eqref{eq:ineq_makarov} for Bloch functions and 
 it solved a conjecture by J.M.~Anderson \cite{Anderson1971} that for any 
 univalent function $f$ on the unit disc,
  there exists a point $\theta\in [0,2\pi)$ such that 
  \begin{equation}\label{eq:anderson}
 \int_0^1 |f''(r e^{i\theta})|\dif r < \infty.
  \end{equation}
 This is done by associating to 
 $f$ the Bloch function $b = \log f'$. We also mention here the extension 
 of \eqref{eq:anderson} from dimension $2$ to arbitrary dimension by 
 P.F.X.~M\"uller and K.~Riegler \cite{MuellerRiegler2020}.
 For more information on the boundary behavior of analytic functions, we refer
 to the book \cite{Pommerenke1992} by Ch. Pommerenke.

In this article we are concerned with orthonormal systems that have similar 
support properties as martingale differences and consist of piecewise polynomials 
or splines instead of piecewise constant 
functions and we  are going to show inequalities in the spirit of \eqref{eq:ineq_makarov} for such systems.
In \cite{part1, part2, part3}, such orthonormal systems were investigated 
in a quite broad sense that we now describe.

Let $(\mathscr F_n)_{n\geq 0}$ be  a filtration on the set $\Omega$ 
such that $\mathscr F_0 = \{\emptyset,\Omega\}$ and for each $n\geq 1$, 
$\mathscr F_n$ is generated by the subdivision of each atom $A$ of $\mathscr F_{n-1}$
into two atoms 
$\sm{A},\la{A}$ of $\mathscr F_{n}$ satisfying $\mu(\la{A}) \geq \mu(\sm{A})>0$. 
Moreover, denote the collection of all atoms in $\mathscr F_n$ by $\mathscr A_n$ and define $\mathscr A
 = \cup_n \mathscr A_n$.
We also let 
$S\subseteq L^\infty$ be a vector space of real valued $\mathscr F$-measurable functions on $\Omega$
of finite dimension.
Denoting  $\|f\|_A :=\|f\|_{L^\infty(A)}$,
we assume  for each $A\in\mathscr A$ the inequality
	\begin{equation}\label{eq:L1Linfty}
			\mu(\{ \omega\in A : |f(\omega)| \geq c_1\|f\|_{A}
		\}) \geq c_2\mu(A),\qquad f\in S
	\end{equation}
for some constants $c_1,c_2\in (0,1]$ independent of $A$.
It can be checked that inequality \eqref{eq:L1Linfty} for fixed $A$ is 
equivalent to the comparability of $\|f\|_A$ and the mean value
 $\mu(A)^{-1}\int_A|f|\dif\mu$ for all $f\in S$.
For each $n\geq 0$ set
\[
S_n := \{ f:\Omega\to\mathbb R\; |\; \text{for each atom of $\mathscr F_n$ 
	there exists $g\in S$ so that $f\charfun_A = g\charfun_A$}\}.
\]
Thus, $S_n$ consists of those functions that are piecewise (on each atom of $\mathscr F_n$)
 contained in $S$.
Let $P_n$ be the orthoprojector onto $S_n$ with respect to the inner product 
on the space $L^2(\Omega,\mathscr F,\mu)$.
Additionally, we denote $S(A)= \{f\cdot \charfun_A : f\in S\}$ for $A\subset \Omega$.

In the articles \cite{part1,part2, part3} we investigate properties of orthonormal 
systems resulting from this pretty general setting of data. Examples of  results that 
hold true already under the above assumptions is the a.e. convergence of $P_n f$ for 
$f\in L^1$
or that the series of differences $(P_n - P_{n-1}) f$
converges unconditionally in $L^p$ for $p\in (1,\infty)$, i.e. we have the analogue 
of Burkholder's inequality for martingale differences.
The main class of examples considered in the papers 
\cite{part1,part2, part3}, that satisfy inequality \eqref{eq:L1Linfty},
are polynomial spaces $S$.
We note that the framework above only allows piecewise polynomials without 
smoothness conditions, but the aforementioned results about convergence 
of spline difference series are also true in the 
setting of smooth splines. For many results in this direction, we 
refer to \cite{m4,m5,m9,m11,m14,m13,m17,m15,m16,m20}.

In contrast to those convergence results, the underlying article analyzes the 
asymptotic behavior of unbounded spline 
difference series from the viewpoint of inequality \eqref{eq:ineq_makarov} with
the martingale $(M_n)$ replaced by its spline version, i.e. a sequence $(f_n)$
satisfying $P_{n-1} f_n = f_{n-1}$.

In what follows we use the notation $A(t)\lesssim B(t)$ if there exists a constant $C$ that depends 
only on the data $(\Omega,\mathscr F,\mu)$ and $S$ satisfying $A(t)\leq C B(t)$, 
where $t$ denotes all implicit and explicit dependencies that the expressions $A$ and 
$B$ might have. Similarly we use the symbols $\gtrsim$ and $\simeq$.

From now on we only consider data 
$(\Omega,\mathscr F,\mu), S, \mathscr A$ satisfying the following 
(additional) assumptions:
\begin{enumerate}
	\item The probability space $(\Omega,\mathscr F,\mu)$ consists of some convex compact subset 
	$\Omega$ of $\mathbb R^d$, the Borel $\sigma$-algebra $\mathscr F$ on $\Omega$, and 
	some probability measure $\mu$ on $\Omega$.
	\item The finite-dimensional vector space $S$ consists of polynomials on $\Omega$ such 
		that there exists a function $g\in S$ satisfying $g : \Omega \to [c_3,1]$
		for some positive constant $c_3$.
	\item The atoms $\mathscr A$ consist of convex subsets $A$ of $\Omega$ that, 
			in combination with the measure $\mu$, satisfy inquality \eqref{eq:L1Linfty}, 
			and such that 
			\begin{equation} \label{eq:comp_w_diam}
				  w(A) \simeq \diam A := \sup_{x,y\in A} |x-y|, 
			\end{equation}
			denoting by $w(A)$ the minimal distance between two parallel supporting hyperplanes of $A$.
		Additionally, assume that  $\max_{A\in\mathscr A_n} \diam A \to 0$ as $n\to\infty$.
\end{enumerate}

We give a few remarks regarding those conditions. Note that if the space of polynomials 
$S$ is such that the constant function is contained in $S$, we can choose $g \equiv 1$ in
condition (2). Moreover, in the case of polynomials on the real line ($d=1$), the 
comparability condition \eqref{eq:comp_w_diam} is no restriction whereas for $d>1$, inequality
 \eqref{eq:comp_w_diam} states that  
atoms are approximately of the same size in every direction.

We now discuss more properties that are satisfied by spaces of polynomials $S$, in particular 
regarding Lebesgue measure. 
First we note that inequality \eqref{eq:L1Linfty} is a consequence of 
Remez' inequality (see for instance Ju.A. Brudnyi, M.I. Ganzburg \cite{BrudnyiGanzburg1973} or 
M.I. Ganzburg \cite{Ga2001}) stating that there exist constants $C,n$
such that  for each convex set $A$, each measurable subset $E\subset A$ and each function $f\in S$,
\begin{equation}\label{eq:remez}
\| f\|_A \leq  \Big( \frac{C|A|}{|E|} \Big)^n \|f\|_E,
\end{equation}
where we denote (here and in what follows) by $\mathbb P =|\cdot|$ the 
$d$-dimensional Lebesgue measure normalized so that $|\Omega| = 1$. 
We also have Markov's inequality for the boundedness of 
Lipschitz constants of polynomials, which states that 
there exists a constant $C$ such that for 
each atom $A$ and each function $f\in S$ we have the inequality 
\begin{equation}\label{eq:lip_condition}
\frac{|f(x) - f(y)|}{|x-y|} \leq C \frac{\|f\|_A}{\diam A},\qquad x,y\in A, x\neq y,
\end{equation}
(see D.A. Wilhelmsen \cite{Wilhelmsen1974} or A. Kro\'{o}, S. R\'{e}v\'{e}sz \cite{KrooRevesz1999}
for $d>1$), which takes this form by the comparability assumption \eqref{eq:comp_w_diam} 
 between $w(A)$ and $\diam A$.

Let now $(f_n)$ be a sequence of integrable functions on $(\Omega,\mathscr F,\mu)$, 
which, together with the space $S$ and the atoms $\mathscr A$, satisfies conditions 
(1)--(3) above. 
In order for it to be quickly distinguishable from martingales, 
we say that $(f_n)$ is 
a \emph{smartingale} if it satisfies 
\[
	P_{n-1} f_n = f_{n-1}, \qquad n\geq 1.
\]
The functions $f_n$ will then be piecewise polynomials or \emph{s}plines,
the first letter of this word being the reason for the name \emph{smartingales}.

In this article we show a result in the spirit of \eqref{eq:ineq_makarov} for 
smartingales, which is contained in Theorem~\ref{thm:application}.
In Sections~\ref{sec:tilde_measure} and \ref{sec:lil_smart} we develop the tools 
used in the proof of the aforementioned theorem, which are 
constructions of new measures corresponding to smartingales 
 and part of a law of iterated logarithm for smartingales, respectively.

\section{Change of measure}\label{sec:tilde_measure}
In this section, we construct measures $\tilde{\mathbb P}$ with respect 
to which small perturbations of smartingales are again smartingales.
Let $(f_n)$ be a smartingale with respect to the filtration $(\mathscr F_n)$ and 
the  $d$-dimensional Lebesgue measure $\mathbb P = |\cdot |$.
As for martingales, we use the notation $\Delta f_n = f_n - f_{n-1}$.
We fix the parameter $\lambda >0$ sufficiently small.
Define the new sequence of functions $(\tilde{f}_n)$ via their differences 
$\Delta \tilde{f}_n$ by 
\begin{equation}\label{eq:def_f_tilde}
\Delta \tilde{f}_n = \Delta f_n - \lambda p_n,
\end{equation}
for some non-negative $p_n\in S_n$ such that for each $x\in\Omega$ we have  
$\mathbb E_n p_n(x) \simeq \mathbb E_n |\Delta f_n|(x)$ with some absolute implicit constants.

Note that in the martingale case, with the choice $p_n = |\Delta f_n|$,
 this definition of $\tilde{f}_n$
equals $\tilde{f}_n = f_n - \lambda \sum_{\ell=1}^n |\Delta f_\ell|$, 
considered in N.G. Makarov \cite{Makarov1989}.
We will construct a measure $\tilde{\mathbb P}$ such that $(\tilde{f}_n)$ is 
a smartingale with respect to $\tilde{\mathbb P}$, i.e.
with the projector $\tilde{P}_n$ onto $S_n$ with respect to the measure $\tilde{\mathbb P}$,
we have 
\[
	\tilde{P}_{n-1} \tilde{f}_n = \tilde{f}_{n-1},\qquad n\in\mathbb N.
\]
The measure  $\tilde{\mathbb P}$ will be the limit of some 
measures $\tilde{\mathbb P}_n$ of the form $\dif\tilde{\mathbb P}_n = d_n \dif\mathbb P$
for some $\mathscr F_n$-measurable densities $d_n$. 
Starting with $\tilde{\mathbb P}_0 = \mathbb P$, 
the measures $\tilde{\mathbb P}_n$ have to then satisfy the smartingale condition 
\begin{equation}\label{eq:cond}
0 = \int_A \Delta\tilde{f}_\ell \cdot f \dif\tilde{\mathbb P}_n 
= \int_{A} (\Delta f_\ell - \lambda p_\ell) \cdot f \dif\tilde{\mathbb P}_n 
 \qquad f\in S, A\in\mathscr A_{n-1}
\end{equation}
for the differences $\Delta \tilde{f}_\ell$ for each $\ell\leq n$.
When giving an iterative formula for the measures $\tilde{\mathbb P}_n$, it is enough 
to have the smartingale condition \eqref{eq:cond} only for $\ell = n$ if we additionally assume 
the compatibility conditions 
\begin{equation}\label{eq:compatibility}
\int_{A} q \dif\tilde{\mathbb P}_{n} = \int_{A} q \dif\tilde{\mathbb P}_{n-1}, 
\qquad q\in S^2 := \operatorname{span}\{ g_1\cdot g_2 : g_1,g_2\in S\}.
\end{equation}

We will perform the inductive construction of the sequence $\tilde{\mathbb P}_n$ and 
$\tilde{\mathbb P}$ in the two cases 
$\dim S =1 $ and $\dim S >1$ separately.
The motivation for presenting the simple case of $\dim S = 1$ is twofold. Firstly, 
we don't need 
to assume additional constraints on the atoms and secondly
we also don't have to pass to sparse smartingales (see Definition~\ref{def:sparse}) 
as in the case of spaces $S$ with $\dim S > 1$
presented in Section~\ref{sec:higher_dim_S}.

The result in both cases is that the desired measures $\tilde{\mathbb P}$ can 
indeed be constructed (see Theorem~\ref{thm:dim1} and Theorem~\ref{thm:tilde_P}).
Moreover, we give additional estimates of the measure $\tilde{\mathbb P}$ in
terms of the Lebesgue measure $\mathbb P$ that help us in proving that the set of points where the inequality
\[
	\liminf_{n\to\infty} \frac{f_n}{\sum_{k\leq n} \mathbb E_k |\Delta f_k|} >0
\]
is satisfied, has full Hausdorff dimension (see Theorem~\ref{thm:application} in the end).
In the case of $\dim S >1$, showing the mere existence of $\tilde{\mathbb P}$ is not 
as easy as in the martingale or $\dim S =1$ case. 
To guarantee the solvability 
of the involved linear systems for the measures $\tilde{\mathbb P}_n$ (see~\eqref{eq:cond} and \eqref{eq:compatibility}),
 we need a few auxiliary results that are 
given in Lemmas~\ref{lem:product}, \ref{lem:existence_ell},
~\ref{lem:independent}, and~\ref{lem:det_est}.

\subsection{The case $\dim S= 1$}
We first investigate the case $\dim S=1$ and
 let $S = \operatorname{span}\{ g\}$
for some polynomial $g:\Omega\to [c_3,1]$ with
some positive constant $c_3$. 

We set $\nu(A) = \int_A g^2 \dif\mathbb P$, 
which is a finite measure on $\Omega$.
Assume that 
$A\in\mathscr A_{n-1}$ is an atom of $\mathscr F_{n-1}$.
Then, we have 
\[
	\Delta f_n = \alpha ( \nu(A'') g \charfun_{A'} - \nu(A') g\charfun_{A''}),\qquad\text{on }A,
\]
for some $\alpha\in\mathbb R$, since $\int_{A} g\cdot \Delta f_n  \dif\mathbb P = 0$. 
We assume the normalization $\alpha = \varepsilon$ for some sign $\varepsilon\in \{\pm 1\}$.
Since $0<c_3\leq g\leq 1$, we can choose the functions $p_n$ in~\eqref{eq:def_f_tilde} such that 
\begin{equation}\label{eq:def_tilde_1D}
\Delta\tilde{f}_n = \Delta f_n - \lambda\big(  \nu(A'') g \charfun_{A'} + \nu(A') g \charfun_{A''} \big).
\end{equation}
With the starting function $d_0 = 1$, we will construct a sequence of densities 
$d_n \geq 0$ (with respect to the measure $\mathbb P$) such that  on $A$ we have 
$d_n = d_n'\charfun_{A'} + d_n''\charfun_{A''}$ for some (positive) numbers $d_n',d_n''$ and we 
set $\dif \tilde{\mathbb P}_n = d_n \dif \mathbb P$.
Conditions \eqref{eq:cond} and \eqref{eq:compatibility} translate
to the following conditions for the numbers $d_n'$ and $d_n''$:
\begin{align*}
	\nu(A')\nu(A'') \big(d_n' (\varepsilon  -\lambda) +d_n'' ( - \varepsilon - \lambda) \big)  &= 0,\\
	d_n' \nu(A') + d_n'' \nu(A'') &= d_{n-1}(A)\nu(A),
\end{align*}
denoting by $d_{n-1}(A)$ the value of the $\mathscr F_{n-1}$-measurable function 
$d_{n-1}$ on the atom $A\in\mathscr A_{n-1}$.
Corresponding to this system of equations,
we have the matrix 
\[
B_\lambda = \begin{pmatrix}
	\varepsilon-\lambda &  -\varepsilon -\lambda \\
	\mu & 1-\mu
\end{pmatrix}
\]
with the number $\mu = \nu(A')/\nu(A)$ and  the right hand side $(0,d_{n-1}(A))^T$,
Then, $\det B_\lambda = \varepsilon + \lambda(2\mu-1)$ and therefore 
\begin{equation}
	\label{eq:dnprime}
\begin{pmatrix}
	d_n' \\
	d_n''
\end{pmatrix}
= \frac{d_{n-1}(A)}{\varepsilon+\lambda(2\mu-1)} 
\begin{pmatrix}
	\varepsilon + \lambda \\
	\varepsilon - \lambda
\end{pmatrix}.
\end{equation}

We next want to give some 
estimates for $\mathbb P_n(B) = d_n(B) \mathbb P(B)$ for atoms $B\in\mathscr A_n$.
This is the purpose of the following couple of lemmas.

\begin{lem}\label{lem:triv}
	For every $\lambda\in (0,1)$ and every $\eta\in (-1,1)$,
	\[
		(1+\eta) \leq \min \big(2 (1+\eta\lambda), 3(1-\eta\lambda) \big)
	\]
\end{lem}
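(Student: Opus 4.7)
The plan is to split the minimum and verify the two affine inequalities $1+\eta \leq 2(1+\eta\lambda)$ and $1+\eta \leq 3(1-\eta\lambda)$ separately. Both reduce to simple estimates in $\eta$, so no deep argument is required; the lemma is, as its label indicates, essentially a book-keeping step.

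For the first, I would rewrite $1+\eta \leq 2(1+\eta\lambda)$ as $\eta(1-2\lambda) \leq 1$. Since $\lambda\in(0,1)$ gives $|1-2\lambda|<1$ and $\eta\in(-1,1)$ gives $|\eta|<1$, the product satisfies $|\eta(1-2\lambda)|<1$, which gives the claim.

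For the second, the rearrangement of $1+\eta \leq 3(1-\eta\lambda)$ is $\eta(1+3\lambda) \leq 2$. The case $\eta\leq 0$ is immediate, since the left-hand side is then non-positive. For $\eta\in(0,1)$, I would use $\eta<1$ to bound the left-hand side by $1+3\lambda$, so it suffices to check $1+3\lambda\leq 2$. This is the pinch point of the lemma and is where the standing hypothesis from the start of the section that $\lambda$ be sufficiently small enters (concretely, $\lambda\leq 1/3$ is enough).

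The only real obstacle, such as it is, is making sure the two halves are actually combined correctly into the stated minimum and identifying which of the two is tight for given $\lambda,\eta$; this is what dictates the coefficients $2$ and $3$ appearing asymmetrically. Otherwise the entire argument is direct algebraic manipulation.
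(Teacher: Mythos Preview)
Your approach is essentially identical to the paper's: split the minimum into the two affine inequalities and handle each by elementary algebra. The paper rewrites the first inequality as $\eta(1-2\lambda)\leq 1$ exactly as you do, and for the second simply asserts it is ``clearly true for those parameter ranges.''

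On that second inequality you are in fact more careful than the paper. The lemma as stated (for all $\lambda\in(0,1)$) is not actually true: taking, say, $\eta=0.9$ and $\lambda=0.5$ gives $1+\eta=1.9$ while $3(1-\eta\lambda)=1.65$. Your reduction to $\eta(1+3\lambda)\leq 2$ shows that the correct range is $\lambda\leq 1/3$, and you rightly point to the standing ``$\lambda$ sufficiently small'' hypothesis from the beginning of the section to cover this. The paper's one-line dismissal of the second inequality is an oversight; your version is the accurate one, and since the lemma is only ever applied for small $\lambda$ nothing downstream is affected.
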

\begin{proof}
The first inequality is equivalent to the inequality $\eta(1-2\lambda) \leq 1$, which is clearly true for 
those ranges of $\lambda,\eta$. The second inequality is also clearly true for those parameter ranges.
\end{proof}

\begin{lem}\label{lem:tilde_estimate}
	We have the inequalities
	\begin{equation}\label{eq:plambda}
		\Big(\frac{ 2 }{1+\eta}\Big)^{-3\lambda} \leq 
	\frac{\lambda + \varepsilon}{\eta\lambda + \varepsilon} \leq \Big(\frac{ 2 }{1+\eta}\Big)^{2\lambda}
	\end{equation}
	for every $\varepsilon\in\{\pm 1\}$, every $\eta\in (-1,1)$, and every $\lambda\in(0,1)$.

\end{lem}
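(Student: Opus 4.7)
My plan is to take logarithms of each of the three quantities in \eqref{eq:plambda} and reduce the two-sided inequality to proving that the two auxiliary functions
\[
F(\eta) := 2\lambda\log\frac{2}{1+\eta} - \log\frac{\lambda+\varepsilon}{\eta\lambda+\varepsilon},
\qquad
G(\eta) := \log\frac{\lambda+\varepsilon}{\eta\lambda+\varepsilon} + 3\lambda\log\frac{2}{1+\eta}
\]
are nonnegative on $\eta \in (-1,1)$ for each fixed $\varepsilon \in \{\pm 1\}$ and $\lambda \in (0,1)$. The key observation is that $F(1)=G(1)=0$, since the ratio $(\lambda+\varepsilon)/(\eta\lambda+\varepsilon)$ equals $1$ at $\eta=1$ and the factor $\log(2/(1+\eta))$ vanishes there. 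Hence the whole task reduces to showing that $F$ and $G$ are (weakly) monotonically decreasing on $(-1,1]$.

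I would then compute $F'$ and $G'$ and place each over a common positive denominator. In three of the four cases---namely $\varepsilon=1$ for both $F$ and $G$, and $\varepsilon=-1$ for $F$---the numerator turns out to be a linear function of $\eta$ whose non-positivity is immediate from the ranges $\eta\in(-1,1)$ and $\lambda\in(0,1)$ alone. For example, when $\varepsilon=1$, the numerator of $F'(\eta)$ simplifies to $\lambda\bigl(\eta(1-2\lambda)-1\bigr)$, which is manifestly non-positive since $|\eta(1-2\lambda)|<1$ in the given ranges. The analogous computations for $G'$ with $\varepsilon=1$ and for $F'$ with $\varepsilon=-1$ produce two explicit negative summands each, so in those cases no further input is required.

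The remaining case---the lower bound when $\varepsilon=-1$---is where Lemma~\ref{lem:triv} enters decisively. Here the numerator of $G'(\eta)$ works out to be proportional to $(1+\eta) - 3(1-\eta\lambda)$, and its non-positivity is exactly the second inequality of Lemma~\ref{lem:triv}. The factor $3$ in the exponent $-3\lambda$ on the lower bound of \eqref{eq:plambda} is therefore matched to the constant $3$ in Lemma~\ref{lem:triv} precisely so that this sign analysis closes; a smaller constant would not suffice. This fourth case is the main obstacle I foresee: once all four sign analyses are in place, combining them with $F(1)=G(1)=0$ yields $F,G\geq 0$ on $(-1,1]$, which is \eqref{eq:plambda}.
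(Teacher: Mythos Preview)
Your proposal is correct and is essentially the paper's argument in contrapositive form: the paper writes $\log\frac{\lambda+\varepsilon}{\eta\lambda+\varepsilon}$ and $\log\frac{2}{1+\eta}$ as integrals $\int_\eta^1(\cdots)\,dt$ and compares integrands pointwise via Lemma~\ref{lem:triv}, whereas you differentiate the same logarithmic combinations in $\eta$, use the boundary value at $\eta=1$, and check the sign of the derivative---which is precisely the paper's integrand evaluated at $t=\eta$. The only cosmetic difference is that the paper first reduces the four $(\varepsilon,\text{side})$ cases to two via the monotonicity $\frac{1-\lambda}{1-\eta\lambda}\le\frac{1+\lambda}{1+\eta\lambda}$, while you treat all four directly; note also that your ``common positive denominator'' phrasing is slightly off when $\varepsilon=-1$ (there $\eta\lambda+\varepsilon<0$), but you correctly handle those cases anyway by observing that $F'$ and $G'$ split into two manifestly negative summands.
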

\begin{proof}
	For fixed $\eta\in (-1,1)$ and $\lambda>0$,
	\[
		\frac{\lambda - 1}{\eta\lambda -1}	= \frac{1-\lambda}{1-\eta\lambda}\leq \frac{1+\lambda}{1+\eta\lambda}.
	\]
	This shows that the right-hand inequality for $\varepsilon = 1$ also implies the right-hand inequality for $\varepsilon =-1$.
	Additionally, the left-hand inequality for $\varepsilon = -1$ implies the left-hand inequality for $\varepsilon = 1$.
	
	We now show the right-hand inequality and assume without restriction that $\varepsilon = 1$. We can get equality in \eqref{eq:plambda}
	if the exponent depends also on $\eta$ by taking the logarithm of \eqref{eq:plambda}. Let the corresponding function of 
	the exponent be $p(\lambda,\eta)$,
	which is then given by 
	\[
		p(\lambda,\eta) = \frac{\log(1+\lambda) - \log(1+\eta\lambda)}{\log 2 - \log(1+\eta)}.
	\]
	We now show that $p(\lambda,\eta) \leq p(\lambda):=2\lambda$ for all $\eta\in (-1,1)$.
	Indeed, we can write 
	\[
		p(\lambda,\eta) = \lambda \frac{\int_{\eta}^1 \frac{1}{1+t\lambda}\dif t}{\int_{\eta}^1 \frac{1}{1+t} \dif t}.
	\]
	Using Lemma~\ref{lem:triv} on the integrand $1/(1+t\lambda)$ pointwise gives the asserted inequality 
	$p(\lambda,\eta)\leq 2\lambda$.  Since the right hand side of \eqref{eq:plambda} is increasing 
	in the exponent, inequality \eqref{eq:plambda} is satisfied as desired. 

	Next, we show the left-hand inequality and assume that $\varepsilon =-1$.
	Similarly, the exponent $p(\lambda,\eta)$ takes the form 
	\[
		p(\lambda,\eta) = \frac{\log(1-\lambda) - \log(1-\eta\lambda)}{\log 2 - \log(1+\eta)} = -\lambda
		\frac{\int_{\eta}^1 \frac{1}{1-t\lambda}\dif t}{\int_{\eta}^1 \frac{1}{1+t} \dif t}.
	\]
	By Lemma~\ref{lem:triv} again, we see that $p(\lambda,\eta) \geq -3\lambda$, implying the assertion.
\end{proof}

Having defined the densities $d_n$ for all $n\in \mathbb N$, and given an atom 
$B\in \mathscr A_j$, we define 
\[
	d(B) := d_j(B).	
\] 
By construction of the functions $(d_n)$, this is well defined. 
By formula \eqref{eq:dnprime} and Lemma~\ref{lem:tilde_estimate} (with $\eta = 2\mu-1$), 
we have for any atom $B\in\mathscr A$ 
\begin{equation*}
d(B) = d(\pp(B))	\Big(\frac{\nu(\pp(B))}{\nu(B)}\Big)^{\alpha},
\end{equation*}
for some $\alpha\in [-3\lambda,2\lambda]$,
and inductively for atoms $A\subset B$
\begin{equation}\label{eq:bd_d}
\Big(\frac{\nu(A)}{\nu(B)}\Big)^{3\lambda}\leq \frac{d(A)}{d(B)} 
\leq \Big(\frac{\nu(A)}{\nu(B)}\Big)^{-2\lambda}.
\end{equation}
We consider the measures 
$\dif\theta_n =g^2 \dif\tilde{\mathbb P}_n =  g^2 d_n \dif\mathbb P $.
Define 
\[
\theta(A) := \lim_{n\to\infty} \theta_n(A),\qquad A\in\cup_j \mathscr A_j,
\] 
where the limit exists since for fixed $A$, the sequence $\theta_n(A)$ is eventually constant by \eqref{eq:compatibility}.
The set function $\theta$ is an additive measure on $\cup_j \mathscr A_j$, 
We have
for each atom $B$, by the above estimate for $d(B)$,
\begin{equation}\label{eq:theta_estimate}
	\theta(B) =  \int_B g^2 d(B) \dif\mathbb P \leq  \nu(B)^{1-2\lambda}.
\end{equation}
Since $\nu$ is a $\sigma$-additive, also $\theta$ is $\sigma$-additive ($\emptyset$-continuity).
Therefore it extends to a $\sigma$-additive measure on the $\sigma$-algebra generated 
by $\cup_j \mathscr A_j$.
Since functions $f\in C(\Omega)$ can be approximated by simple functions with respect 
to the atoms $\mathscr A$ (by the assumption that the maximal diameter of atoms 
tends to zero), we deduce that $\theta_n$ converges also weakly* to $\theta$.

By the assumption $g\geq c_3 > 0$, we see that  
\[
\tilde{\mathbb P}_n(\Omega) \leq c_3^{-2} \int_\Omega g^2 \dif\tilde{\mathbb P}_n = c_3^{-2 }\theta_n(\Omega) \leq  c_3^{-2},
\]
which gives that $(\tilde{\mathbb P}_n)$ is bounded and therefore there exists a measure $\tilde{\mathbb P}$
which is the weak* limit of some subsequence of $(\tilde{\mathbb P}_n)$.
Since by definition $\dif \theta_n = g^2 \dif\tilde{\mathbb P}_n$, we obtain $d\theta = g^2 \dif\tilde{\mathbb P}$.
Note that on $A\in \mathscr A_{n-1}$, we have  $\Delta\tilde{f}_n = a g\charfun_{A'} + b g\charfun_{A''}$ 
for some numbers $a,b$. Therefore
\begin{align*}
	\int_A \Delta\tilde{f}_n g\dif\tilde{\mathbb P}	&= a\int_{A'} g^2\dif\tilde{\mathbb P} + b \int_{A''} g^2\dif\tilde{\mathbb P} 
	= a \theta(A') + b \theta(A'') \\
	&= a \theta_n(A') + b\theta_n(A'') 
	= a \int_{A'} g^2 d_n \dif\mathbb P + b \int_{A''} g^2 d_n \dif\mathbb P \\
	&= \int_A \Delta \tilde{f}_n g d_n \dif\mathbb P = \int_A \Delta \tilde{f}_n g\dif\mathbb P_n  = 0,
\end{align*}
by condition \eqref{eq:cond}. This gives that $\Delta\tilde{f}_n$ is a difference sequence with 
respect to the measure $\tilde{\mathbb P}$. 

Summarizing, we have the following result.
\begin{thm}\label{thm:dim1}
	For $\lambda\in [0,1)$ and $\dim S = 1$, the functions $(\tilde{f}_n)$ form a smartingale 
	with respect to a probability measure $\tilde{\mathbb P}$
satisfying that  for each pair of atoms $A\subset B$
there exists a number $\alpha\in [-2\lambda, 3\lambda]$ such that 
\[
c_3^{2+3\lambda}\Big(\frac{| A|}{|B|}\Big)^{1+3\lambda}	
\leq\frac{\tilde{\mathbb P}(A)}{\tilde{\mathbb P}(B)} 
\leq c_3^{2\lambda-2} \Big(\frac{|A|}{|B|}\Big)^{1-2\lambda},
\]	
with the constant $c_3$ satisfying $g\geq c_3$.
\end{thm}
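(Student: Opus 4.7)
The proof separates into two pieces: verifying that $(\tilde{f}_n)$ is a $\tilde{\mathbb P}$-smartingale with respect to a \emph{probability} measure, and establishing the two-sided bound on $\tilde{\mathbb P}(A)/\tilde{\mathbb P}(B)$. For the smartingale property, the calculation displayed just before the theorem already yields $\int_A \Delta\tilde{f}_n\cdot g\,d\tilde{\mathbb P}=0$ for every $A\in\mathscr A_{n-1}$. Because $\dim S=1$ with $S=\operatorname{span}\{g\}$, the space $S_{n-1}$ consists precisely of functions of the form $\sum_{A\in\mathscr A_{n-1}} c_A\,g\charfun_A$, so this identity is nothing but $\Delta\tilde{f}_n\perp_{\tilde{\mathbb P}} S_{n-1}$, equivalent to $\tilde{P}_{n-1}\tilde{f}_n=\tilde{f}_{n-1}$. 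Since the construction gives $\tilde{\mathbb P}(\Omega)\in[1,c_3^{-2}]$, I would replace $\tilde{\mathbb P}$ by its normalization $\tilde{\mathbb P}/\tilde{\mathbb P}(\Omega)$; this preserves both orthogonality and the ratios $\tilde{\mathbb P}(A)/\tilde{\mathbb P}(B)$.

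For the ratio bound, the key is to first prove the analogous statement for $\theta$ and then pass to $\tilde{\mathbb P}$. Because $g^2\in S^2$, the compatibility condition \eqref{eq:compatibility} forces $\theta_n(A)$ to be eventually constant for any fixed atom $A\in\mathscr A_j$, stabilizing at $\theta(A)=d(A)\,\nu(A)$. Combined with \eqref{eq:bd_d}, this yields for $A\subset B$
\[
\Big(\frac{\nu(A)}{\nu(B)}\Big)^{1+3\lambda}\leq\frac{\theta(A)}{\theta(B)}\leq\Big(\frac{\nu(A)}{\nu(B)}\Big)^{1-2\lambda}.
\]
Next, the assumption $c_3\leq g\leq 1$ gives both $c_3^2|A|\leq \nu(A)\leq|A|$ and, through $d\theta=g^2\,d\tilde{\mathbb P}$, the measure comparison $\theta\leq\tilde{\mathbb P}\leq c_3^{-2}\theta$. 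Substituting these comparisons into the $\theta$-ratio inequality and converting $\nu$-ratios into $|\cdot|$-ratios produces the claimed bounds.

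The main technical nuisance I expect is bookkeeping of the powers of $c_3$: three separate comparisons ($\tilde{\mathbb P}\leftrightarrow\theta$, $\nu\leftrightarrow|\cdot|$ in the numerator, $\nu\leftrightarrow|\cdot|$ in the denominator) each cost a factor $c_3^{\pm 2}$, and pairing them naively gives a weaker exponent than claimed. The trick is to always pair the tight upper bound on one side with the tight lower bound on the other (for example, $\nu(A)\leq|A|$ with $\nu(B)\geq c_3^2|B|$ when estimating $\nu(A)/\nu(B)$ from above, and symmetrically for the lower estimate), and to distribute the exponents $1-2\lambda$ and $1+3\lambda$ carefully across the factors so that they are absorbed into the stated exponents $2\lambda-2$ and $2+3\lambda$ on $c_3$.
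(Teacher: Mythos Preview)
Your approach matches the paper's exactly: obtain the $\theta$-ratio bound from $\theta(A)=d(A)\nu(A)$ together with \eqref{eq:bd_d}, then pass to $\tilde{\mathbb P}$ and $|\cdot|$ using the two-sided comparisons that $c_3\leq g\leq 1$ provides.

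One remark on your final paragraph. Your comparisons $c_3^2|A|\leq\nu(A)\leq|A|$ and $c_3^2\,\tilde{\mathbb P}\leq\theta\leq\tilde{\mathbb P}$ are the correct ones, and with them the exponents on $c_3$ that actually emerge are $4\lambda-4$ and $4+6\lambda$, not the stated $2\lambda-2$ and $2+3\lambda$. The paper's own proof writes $c_3$ in place of $c_3^2$ in both comparison inequalities, which accounts for the factor-of-two discrepancy in the exponents. No clever pairing will recover the constants exactly as stated, so do not spend time trying; your argument is correct and the constants in the statement are simply off by this harmless factor.
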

\begin{proof}
	The only thing that is still left to show is the given estimate. Since $d(A) = \theta(A)/\nu(A)$
	for atoms $A$, estimate \eqref{eq:bd_d} shows the desired estimate 
	with $\theta$ in place of $\tilde{\mathbb P}$ and with $\nu$ in place of $|\cdot|$.
	The assertion then follows from the inequalities $c_3\tilde{\mathbb P}(B) \leq \theta(B)\leq \tilde{\mathbb P}(B)$
	and $c_3  |B| \leq \nu(B) \leq |B|$ for atoms $B$.
\end{proof}

\subsection{The case of $\dim S>1$}\label{sec:higher_dim_S}

In this section, we treat the more general case of $\dim S > 1$.
For notational simplicity, we assume that the constant function $g = 1$ is 
contained in $S$. 
We begin with the following simple result which is a consequence of 
Remez' inequality \eqref{eq:remez}.

\begin{lem}\label{lem:product}
		 For all $f,u\in S$ and all atoms $A\in\mathscr A$ we have  for each $1\leq p\leq\infty$
		\[
			\|f\|_A \|u\|_{L^p(A)} \lesssim \|u f\|_{L^p(A)}.
		\]
\end{lem}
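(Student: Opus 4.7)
The plan is to produce a single subset of $A$ of measure comparable to $|A|$ on which \emph{both} $|f|$ and $|u|$ are simultaneously comparable to their $L^\infty(A)$ norms, and then to integrate $|uf|^p$ over that set. The first ingredient comes straight from \eqref{eq:L1Linfty} applied to $f$: there is $E_f \subseteq A$ with $|E_f| \geq c_2 |A|$ on which $|f| \geq c_1 \|f\|_A$.

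The second ingredient is a sublevel-set estimate for $u$ deduced from Remez' inequality \eqref{eq:remez}. For $\alpha>0$, set $G_\alpha := \{x\in A : |u(x)| \leq \alpha\}$. Applying \eqref{eq:remez} with $E=G_\alpha$ (where $\|u\|_{G_\alpha}\leq \alpha$) gives $\|u\|_A \leq (C|A|/|G_\alpha|)^n \alpha$, hence $|G_\alpha| \leq C|A|\,(\alpha/\|u\|_A)^{1/n}$. Choosing $\alpha = c\|u\|_A$ for a sufficiently small $c>0$ depending only on $c_2$, $C$ and $n$ so that $C c^{1/n}\leq c_2/2$, the set $E_u := A\setminus G_\alpha$ satisfies $|E_u| \geq (1-c_2/2)|A|$ and $|u| \geq c \|u\|_A$ on $E_u$.

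Inclusion--exclusion now gives $|E_f \cap E_u| \geq |E_f| + |E_u| - |A| \geq (c_2/2)|A|$, and on this intersection $|uf| \geq c_1 c \|f\|_A \|u\|_A$. For $1\leq p<\infty$ this yields
\[
\|uf\|_{L^p(A)}^p \;\geq\; \int_{E_f \cap E_u} |uf|^p \dif\mathbb{P} \;\geq\; (c_1 c)^p (c_2/2)\, |A|\, \|f\|_A^p \|u\|_A^p \;\geq\; (c_1 c)^p (c_2/2)\, \|f\|_A^p \|u\|_{L^p(A)}^p,
\]
where the last step uses the trivial bound $\|u\|_{L^p(A)}^p \leq |A|\,\|u\|_A^p$. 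Taking $p$-th roots gives the claim. The case $p=\infty$ is immediate: any point of $E_f\cap E_u$ witnesses $\|uf\|_{L^\infty(A)} \geq c_1 c \|f\|_A \|u\|_{L^\infty(A)}$.

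The main obstacle is the sublevel-set step: one must recognise that Remez' inequality should be applied to the small-$u$ set in order to show that the large-$u$ set already occupies almost all of $A$, so that its intersection with $E_f$ remains large. Once this is set up, the remainder is a uniform argument in $p$ and only uses the trivial $L^p\leq L^\infty$ inequality, avoiding any separate reverse-H\"older argument.
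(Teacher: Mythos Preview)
Your proof is correct and follows essentially the same route as the paper: both arguments produce a subset of $A$ of measure comparable to $|A|$ on which $|f|\gtrsim\|f\|_A$ and $|u|\gtrsim\|u\|_A$ simultaneously, via Remez-type sublevel-set bounds, and conclude by integrating. The only cosmetic differences are that the paper applies Remez symmetrically to both $f$ and $u$ (rather than mixing \eqref{eq:L1Linfty} for $f$ with Remez for $u$), and finishes by invoking the equivalence of renormalised $L^p$-norms for polynomials on $A$ instead of your direct integration with $\|u\|_{L^p(A)}\leq |A|^{1/p}\|u\|_A$.
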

\begin{proof}
	Let $\alpha=(4C)^{-n}$ where $n,C$ are the constants in inequality \eqref{eq:remez}.
	Fix an atom $A\in\mathscr A$ and
	let $E_f = \{\omega\in A: |f(\omega)| < \alpha \|f\|_A\}$. Then, inequality \eqref{eq:remez}
	implies $|E_f|\leq |A|/4$ and $|E_u|\leq |A|/4$. This gives for $B=(A\setminus E_f)\cap (A\setminus E_u)$
	that  $|B| \geq |A|/2$ and on $B$ we have $|f|\geq \alpha \|f\|_A$ and $|u|\geq \alpha\|u\|_A$.
	This gives $|uf| \geq \alpha^2 \|u\|_A \|f\|_A$ on $B$ and thus the assertion for $p=\infty$.
	Since all the (renormalized) $p$-norms of polynomials on $A$ are comparable, the assertion 
	follows for all $1\leq p\leq \infty$.
\end{proof}

We continue with assuming that the atoms $\mathscr A$ are given such that the \emph{siblings in $\mathscr A$ have
comparable (Lebesgue) measure}, i.e. there exists a constant $c>0$ such that for all atoms $A\in \mathscr A$
we have $|A'| \geq c|A''|$.
Then, $A'$ and $A''$ also have comparable diameter by our assumption $|A|\simeq (\diam A)^d$
for atoms $A$.

\begin{lem}\label{lem:existence_ell}
	Assume that the siblings in $\mathscr A$ have comparable measure.

	Then there exists $\ell_0 \geq 0$ such that for all $\ell\geq \ell_0$
	there exists a constant  $c>0$  
	 such that for all $j\geq 0$ and atoms $A\in\mathscr A_j$ 
and all $u\in S^2$
\begin{equation*}
	\sum_{B\in \mathscr A_{j+\ell}, B\subseteq A} \Big| \int_{B} u\dif\mathbb P\Big| \geq c\int_A |u|\dif\mathbb P.	
\end{equation*}
\end{lem}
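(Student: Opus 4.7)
The plan is to split the sum over atoms $B\in\mathscr A_{j+\ell}$ with $B\subseteq A$ into \emph{good} atoms on which $u$ has constant sign and \emph{bad} atoms on which $u$ changes sign, since for a good atom $|\int_B u\dif\mathbb P|=\int_B|u|\dif\mathbb P$. Setting $U_\ell:=\bigcup_{B\text{ bad}} B$ this yields the lower bound
\[
\sum_{B\in\mathscr A_{j+\ell},\,B\subseteq A} \Big|\int_B u\dif\mathbb P\Big| \;\geq\; \int_A |u|\dif\mathbb P \;-\; \int_{U_\ell}|u|\dif\mathbb P,
\]
so it suffices to find $\ell_0$ such that $\int_{U_\ell}|u|\dif\mathbb P \leq \tfrac{1}{2}\int_A|u|\dif\mathbb P$ for $\ell\geq\ell_0$, uniformly in $j$, $A\in\mathscr A_j$, and $u\in S^2$.

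Every bad atom contains a zero of $u$, so $U_\ell$ lies in the $\rho_\ell$-neighborhood of $\{u=0\}\cap A$, where $\rho_\ell:=\max_{B\in\mathscr A_{j+\ell},\,B\subseteq A}\diam B$. Applied to $u\in S^2$, Markov's inequality \eqref{eq:lip_condition} gives a Lipschitz constant $\lesssim \|u\|_A/\diam A$, whence for any $x\in U_\ell$ and its nearest zero $y$ in the same atom one has $|u(x)|=|u(x)-u(y)|\lesssim(\rho_\ell/\diam A)\|u\|_A$. Reversing Remez' inequality \eqref{eq:remez} in the usual way (if $\|u\|_E\leq\varepsilon\|u\|_A$ then $|E|\lesssim\varepsilon^{1/n_2}|A|$), applied to $u\in S^2$ with a Remez exponent $n_2$ depending only on $S$, then yields $|U_\ell|\lesssim (\rho_\ell/\diam A)^{1/n_2}\,|A|$.

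To make $\rho_\ell/\diam A$ small uniformly in $A$ I exploit the sibling-comparability hypothesis: it forces each child to have measure at most $(1+c)^{-1}$ times its parent, so after $\ell$ subdivisions $|B|\leq(1+c)^{-\ell}|A|$; combined with $|B|\simeq(\diam B)^d$, which follows from \eqref{eq:comp_w_diam} and convexity, this gives the uniform geometric bound $\rho_\ell/\diam A\lesssim (1+c)^{-\ell/d}$. Using once more that $\|u\|_A|A|\simeq\int_A|u|\dif\mathbb P$ for $u\in S^2$ on the convex set $A$ (another direct consequence of \eqref{eq:remez}), I conclude
\[
\int_{U_\ell}|u|\dif\mathbb P \;\leq\; \|u\|_A\,|U_\ell| \;\lesssim\; \frac{|U_\ell|}{|A|}\int_A|u|\dif\mathbb P \;\lesssim\; (1+c)^{-\ell/(d n_2)}\int_A|u|\dif\mathbb P,
\]
and choosing $\ell_0$ so that the implicit constant times $(1+c)^{-\ell_0/(d n_2)}$ is at most $1/2$ finishes the proof.

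The main obstacle I anticipate is not conceptual but bookkeeping: one must check that Markov's and Remez' inequalities retain constants depending only on the data when passed from $S$ to the product space $S^2$ of polynomials of at most twice the degree. Since these inequalities are quantitative for any finite-dimensional polynomial space on a convex set, this is standard, but the excerpt states them only for $f\in S$ and the generalization has to be spelled out. A secondary point worth emphasizing is that the uniform-in-$A$ shrinkage $\rho_\ell/\diam A\to 0$ does \emph{not} follow from $\max_{A\in\mathscr A_n}\diam A\to 0$ alone and genuinely requires the sibling-comparability assumption introduced just before the lemma.
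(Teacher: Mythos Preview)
Your argument is correct, but it takes a genuinely different route from the paper's. The paper's proof is a one-atom argument: normalize $\|u\|_A=1$, pick $y\in A$ with $|u(y)|=1$, and use Markov's inequality \eqref{eq:lip_condition} (for $S^2$) to see that on the single atom $B_0\in\mathscr A_{j+\ell}$ containing $y$ one has $|u|\geq 1/2$ once $\diam B_0/\diam A$ is small enough; then $\bigl|\int_{B_0}u\bigr|\geq |B_0|/2\gtrsim_\ell |A|\geq \int_A|u|$, since sibling comparability forces all $2^\ell$ sub-atoms to have measure $\simeq_\ell |A|$. Your good/bad-atom decomposition instead accounts for the \emph{entire} sum and bounds the contribution of sign-changing atoms through the zero set; this is more elaborate but also more quantitative (you actually show $\sum_B|\int_B u|\geq (1-o_\ell(1))\int_A|u|$). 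One small simplification: the Remez detour to bound $|U_\ell|$ is unnecessary, since the Markov step already gives $|u|\lesssim(\rho_\ell/\diam A)\|u\|_A$ pointwise on $U_\ell$, and combining this directly with $|U_\ell|\leq |A|$ and $\|u\|_A|A|\simeq\int_A|u|$ yields $\int_{U_\ell}|u|\lesssim(\rho_\ell/\diam A)\int_A|u|$ without the extra $1/n_2$ exponent. Your closing remarks are on point: both Markov and Remez must be invoked for the larger polynomial space $S^2$, and the uniform shrinkage $\rho_\ell/\diam A\to 0$ really does rest on the sibling-comparability hypothesis.
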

\begin{proof}
	Let $\|u \|_A = 1$ and let $y\in A$ be such that $|u(y)|=1$. For any $B\subseteq A$ with $y\in B$
	we have by inequality \eqref{eq:lip_condition} 
	\[
		|u(z)| \geq 1 - C\frac{\diam B}{\diam A},\qquad z\in B,
	\]
	for some constant $C$.
	Since the siblings in $\mathscr A$ are comparable, we can choose the integer 
	$\ell_0$ independently of $A$ so that for all $\ell\geq \ell_0$ and all $B\in\mathscr A_{j+\ell}$ 
	with $B\subseteq A$ satisfy 
	$C\diam B / \diam A \leq 1/2$. Thus, if $B\in \mathscr A_{j+\ell}$ is such that 
	$y\in B$ we obtain 
	\[
		\Big| \int_B u\dif\mathbb P \Big|\geq |B|/2.	
	\]
	This implies the assertion since the measures of all $B\in\mathscr A_{j+\ell}$ with $B\subset A$
	are comparable with a constant that depends only on $\ell$.
\end{proof}

\begin{lem}\label{lem:independent}
	Assume that the siblings in $\mathscr A$ have comparable measure.
	
	Let $A\in\mathscr A_j$ be an atom and let $f\in S$ and $q\in S^2$. Moreover, let $h\in S(A')\oplus S(A'')$
	with $h \perp S(A)$. 
	Choose $\ell \geq \ell_0$ with $\ell_0$ as in Lemma~\ref{lem:existence_ell}.
	Let $(A_i)_{i=1}^{2^\ell}$ be an enumeration 
	of the atoms from $\mathscr A_{j+\ell}$ contained in $A$ and define 
	\[
		v_i = \int_{A_i} h f\dif\mathbb P, \qquad	w_i = \int_{A_i} q\dif\mathbb P,\qquad i=1,\ldots 2^\ell.
	\]

	Then, there exists a constant $\alpha<1$ independent of the choices of $A,f,q,h$
	such that we have the inequality 
	\begin{equation}\label{eq:non_parallel}
			|\langle v,w\rangle| \leq \alpha |v| |w|.
	\end{equation}
\end{lem}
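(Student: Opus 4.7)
Write $h = a\charfun_{A'} + b\charfun_{A''}$ with $a,b\in S$. The assertion is separately homogeneous in $h$ and $q$, so I would normalize $\|hf\|_A = \|q\|_A = 1$ and discard the trivial cases. The strategy is to prove the quantitative non-parallelism bound
\[
    |w - cv|_{\ell^2} \gtrsim |A|/\sqrt{2^\ell}\qquad\text{for every }c\in\mathbb R,
\]
together with the easy upper bound $|w|_{\ell^2}\lesssim |A|/\sqrt{2^\ell}$, which follows immediately from $|w_i|\leq |A_i|\|q\|_A$ and $|A_i|\simeq |A|/2^\ell$ (iterating sibling comparability $\ell$ times). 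Applied at the minimizer $c^* = \langle v,w\rangle/|v|^2$, these give $|w-c^*v|_{\ell^2}/|w|_{\ell^2}\geq \beta$ for some absolute $\beta>0$, equivalently $1-\cos^2\theta\geq\beta^2$ where $\theta$ is the angle between $v$ and $w$; this is precisely the lemma with $\alpha:=\sqrt{1-\beta^2}<1$.

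For the lower bound on $|w - cv|_{\ell^2}$, set $\phi := q - chf$; on $A'$ it coincides with $\phi_1 := q - caf\in S^2$, and on $A''$ with $\phi_2 := q - cbf\in S^2$. Since $(w-cv)_i = \int_{A_i}\phi\dif\mathbb P$, Lemma~\ref{lem:existence_ell} applied separately to $\phi_1$ on $A'$ and $\phi_2$ on $A''$ yields $\sum_i|w_i - cv_i|\gtrsim\int_A|\phi|\dif\mathbb P$, and Cauchy--Schwarz gives $|w-cv|_{\ell^2}\gtrsim\int_A|\phi|\dif\mathbb P/\sqrt{2^\ell}$. Because $\phi_1,\phi_2\in S^2$ and the siblings of $A$ have comparable measure, Remez' inequality reduces the task to
\[
    \|\phi_1\|_A + \|\phi_2\|_A \gtrsim 1\qquad\text{uniformly in }c\in\mathbb R.
\]
I would split on $|c|$. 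For $|c|\leq\gamma$ with $\gamma$ a small fixed threshold, $\|\phi_1\|_A\simeq\|\phi_1\|_{A'}\geq\|q\|_{A'}-|c|\|af\|_{A'}\gtrsim 1-C\gamma$, using Remez for $\|q\|_{A'}\gtrsim 1$ and $\|af\|_{A'}=\|hf\|_{A'}\leq 1$. For $|c|>\gamma$, from $\phi_1-\phi_2 = c(a-b)f$ and Lemma~\ref{lem:product} we have
\[
    \|\phi_1-\phi_2\|_A \gtrsim |c|\,\|a-b\|_A\,\|f\|_A,
\]
and combining with $\|hf\|_A\simeq\max(\|a\|_A,\|b\|_A)\,\|f\|_A$ (again Lemma~\ref{lem:product}) and the auxiliary inequality below yields $\|\phi_1-\phi_2\|_A\gtrsim |c|\|hf\|_A = |c|>\gamma$, hence $\max(\|\phi_1\|_A,\|\phi_2\|_A)\geq \|\phi_1-\phi_2\|_A/2\gtrsim\gamma$.

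The main obstacle is the orthogonality-dependent inequality
\[
    \|a-b\|_A \gtrsim \max(\|a\|_A,\|b\|_A),
\]
which is the quantitative form of the observation that $a\equiv b$ would put $h\in S(A)$ and hence force $h=0$. I would establish it by testing $\int_A h\psi\dif\mathbb P = 0$ against $\psi = a\in S$, which after a short calculation yields $\int_A a^2\dif\mathbb P = \int_{A''}a(a-b)\dif\mathbb P$. The left-hand side is $\gtrsim |A|\|a\|_A^2$ by Remez, while the right-hand side is $\leq |A''|\|a\|_A\|a-b\|_A\lesssim |A|\|a\|_A\|a-b\|_A$; dividing gives $\|a\|_A\lesssim\|a-b\|_A$, and the same argument with $\psi = b$ yields $\|b\|_A\lesssim\|a-b\|_A$.
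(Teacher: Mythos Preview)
Your argument is correct and rests on the same orthogonality insight as the paper—$h\perp S(A)$ forces the two polynomial pieces $a,b$ of $h$ to differ substantially—but the execution differs. The paper normalizes $\|f\|_A=|v|=|w|=1$ and argues by contradiction that $\int_A|hf-q|\geq\beta$: if not, Remez and Lemma~\ref{lem:product} force $h_1\approx h_2$, and testing $h$ against $h_2\in S(A)$ then makes $\|h\|_{L^2(A)}$ small, contradicting $|v|=1$. This yields $|v-w|\gtrsim 1$, and replacing $f$ by $-f$ gives $|v+w|\gtrsim 1$ as well, whence $|\langle v,w\rangle|\leq\alpha<1$ via $|v\pm w|^2=2(1\pm\langle v,w\rangle)$. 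You instead normalize the functions in $L^\infty$, bound $\inf_c|w-cv|$ directly via a case split on $|c|$, and read off the angle from the projection formula; your orthogonality step (testing $h$ against $a$ and $b$ to get $\|a-b\|_A\gtrsim\max(\|a\|_A,\|b\|_A)$) is the $L^\infty$ counterpart of the paper's $\int_A h^2=\int_{A'}h(h_1-h_2)$. The paper's $\pm f$ trick is a bit slicker since it reduces to a single value of $c$ after normalization, while your route is more constructive and avoids the contradiction argument. Both versions share the harmless technicality that applying Lemma~\ref{lem:existence_ell} piecewise on $A',A''$ uses $\ell-1$ levels rather than $\ell$; this is absorbed by the fact that $\diam A'\simeq\diam A$ in the proof of that lemma.
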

\begin{proof}
	We assume without restriction that $\|f\|_A=1$ and $|v| = |w| = 1$.	 Then we first show 
	\begin{equation}\label{eq:positive}
		\int_A |hf - q|\dif \mathbb P \geq \beta	
	\end{equation}
	for some constant $\beta >0$.

	Assume the contrary, i.e. for some $\varepsilon >0$ we have $\int_A |hf - q|\dif\mathbb P\leq\varepsilon$.
	Let $h = h_1\charfun_{A'} + h_2\charfun_{A''}$ for some functions $h_1,h_2 \in S$.
	Since the siblings in $\mathscr A$ have comparable measure, inequality \eqref{eq:remez} implies $\int_A |h_1 - h_2| |f|\dif\mathbb P\lesssim \varepsilon$.
	Thus, by Lemma~\ref{lem:product}  we have $|A|\| h_1 - h_2\|_A \simeq \int_A |h_1 - h_2|\dif\mathbb P \lesssim \varepsilon$.
	Therefore, by orthogonality
	\begin{align*}
		\int_A h^2\dif\mathbb P &= \int_{A} h\cdot h_2\dif\mathbb P+ \int_{A'} h(h_1 - h_2)\dif\mathbb P \\
		&= \int_{A'} h(h_1 - h_2)\dif\mathbb P \leq \Big(\int_{A'} h^2 \dif\mathbb P\Big)^{1/2} 
		\Big( \int_{A'} |h_1 - h_2|^2\dif\mathbb P\Big)^{1/2} \\
		&\lesssim \Big(\int_{A} h^2 \dif\mathbb P\Big)^{1/2} |A|^{1/2} \|h_1 - h_2\|_A,
	\end{align*}
	giving 
		$\|h\|_{L^2(A)} \lesssim \varepsilon |A|^{-1/2}$.
	On the other hand, by the assumption $|v|=1$ we obtain by Cauchy-Schwarz
	\[
		1 = \sum_{i=1}^{2^\ell}\Big(\int_{A_i} hf\dif\mathbb P\Big)^2	\leq \sum_{i=1}^n \int_{A_i} h^2\dif\mathbb P 
		\int_{A_i} f^2\dif\mathbb P \leq \sum_{i=1}^n |A_i| \int_{A_i} h^2 \dif\mathbb P\lesssim |A|\int_A h^2\dif\mathbb P
	\]
	contradicting the above inequality for sufficiently small $\varepsilon$. Thus we have proved 
	\eqref{eq:positive}.

	Now we just apply Lemma~\ref{lem:existence_ell} to get $|v-w| \gtrsim 1 = |v| |w|$. 
	Applying this inequality to $v$ and $-v$ (or $f$ and $-f$) yields the assertion \eqref{eq:non_parallel}
	if we use the identity $|v-w|^2 = |v|^2 + |w|^2 - 2\langle v,w\rangle = 2(1-\langle v,w\rangle)$.
\end{proof}

\begin{lem}\label{lem:det_est}
	Let $v^1,\ldots,v^n \in \mathbb R^n$ be $n$ vectors satisfying $|v^i| = 1$ for all $i = 1,\ldots,n$.
	Additionally, assume that for all $i = 1,\ldots,n-1$ and for all vectors $v$ in the span 
	of $\{v^{i+1},\ldots,v^n\}$ we have the inequality 
	\[
		|\langle v^i,v\rangle|\leq \alpha |v|	
	\]
	for some constant $\alpha < 1$.

	Then we have 
	\[
		|\det(v^1,\ldots,v^n)| \geq (1-\alpha^2)^{n/2}.	
	\]
\end{lem}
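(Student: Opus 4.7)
The plan is to use the QR/Gram--Schmidt decomposition to write the determinant as a product of heights, and then estimate each height using the hypothesis. Concretely, for $i = 1,\ldots,n$ set $V_i = \operatorname{span}\{v^{i+1},\ldots,v^n\}$, with the convention $V_n = \{0\}$, and let $P_i$ denote the orthogonal projection onto $V_i$. Put $u^i = v^i - P_i v^i$, the component of $v^i$ orthogonal to $V_i$.

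A standard identity from linear algebra (obtained by a reverse Gram--Schmidt, or equivalently a QR decomposition applied to the columns in reversed order) gives
\[
|\det(v^1,\ldots,v^n)| = \prod_{i=1}^n |u^i|,
\]
because the change of basis that replaces each $v^i$ by $u^i$ is triangular with unit diagonal when expressed in a suitable ordering. I would include one short sentence justifying this identity, perhaps by induction on $n$: expanding the determinant along the column $v^1 = u^1 + P_1 v^1$, the $P_1 v^1$ contribution vanishes since $P_1 v^1 \in \operatorname{span}\{v^2,\ldots,v^n\}$, and then reduce to the $(n-1)$-dimensional problem inside $V_1$ (where, in an orthonormal basis for $V_1$, the vector $u^1$ contributes its norm as a diagonal entry in the determinant).

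Next, I would use the hypothesis to estimate $|u^i|$ from below. Since $\langle v^i, v\rangle = \langle P_i v^i, v\rangle$ for every $v \in V_i$, Cauchy--Schwarz with the extremal choice $v = P_i v^i$ yields
\[
|P_i v^i| = \sup_{v \in V_i,\, |v|=1} |\langle v^i, v\rangle| \leq \alpha,
\]
where the inequality is exactly the assumption (vacuously true for $i = n$, where $P_n = 0$). Because $u^i \perp P_i v^i$ and $v^i = u^i + P_i v^i$ has unit norm, Pythagoras gives
\[
|u^i|^2 = 1 - |P_i v^i|^2 \geq 1 - \alpha^2.
\]
Multiplying these estimates over $i = 1,\ldots,n$ yields the desired bound $|\det(v^1,\ldots,v^n)| \geq (1-\alpha^2)^{n/2}$.

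There is no real obstacle here; the only step requiring any care is the determinant identity in step 3, and even there the argument is elementary once one peels off one column at a time and uses that subtracting a vector in $V_1$ from $v^1$ leaves the determinant unchanged. Everything else is Cauchy--Schwarz plus Pythagoras.
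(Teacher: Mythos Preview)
Your proof is correct and follows essentially the same approach as the paper: both use a reverse Gram--Schmidt to write the determinant as the product of the norms of the orthogonal components $u^i = v^i - P_i v^i$, and then bound each factor below by $(1-\alpha^2)^{1/2}$. The only cosmetic difference is in the final estimate: you invoke Pythagoras directly ($|u^i|^2 = 1 - |P_i v^i|^2 \geq 1-\alpha^2$), while the paper expands $|v^i - w|^2$ and completes the square, which amounts to the same inequality.
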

\begin{proof}
	We apply Gram-Schmidt (without normalization) to the vectors $(v^1,\ldots,v^n)$ resulting in 
	orthogonal vectors $(\tilde{v}^1,\ldots,\tilde{v}^n)$ with the same determinant 
	as the original vectors.
	Fix $i\in \{1,\ldots,n-1\}$. Then we have, for some $w \in \operatorname{span}\{ v^{i+1},\ldots,v^n\}$ 
	that $\tilde{v}^i = v^i - w$. 
	Then we estimate by assumption
	\begin{align*}
		|\tilde{v}^i|^2 = |v^i - w|^2 &= |v^i|^2 - 2\langle v^i,w\rangle +|w|^2 \geq |v^i|^2 - 2\alpha|w| +  |w|^2 \\
		& = 1-\alpha^2 + \alpha^2 -2\alpha|w|+|w|^2 = 1-\alpha^2 + (\alpha- |w|)^2 \geq 1-\alpha^2.
	\end{align*}
	Since the vectors $\tilde{v}^i$ are orthogonal, we have 
	\[
		|\det(v^1,\ldots, v^n)| = |\det(\tilde{v}^1,\ldots,\tilde{v}^n)| = |\tilde{v}^1| \cdots |\tilde{v}^n|
		\geq (1-\alpha^2)^{n/2},
	\]
	proving the lemma.
\end{proof}

\begin{defin}\label{def:sparse}
A smartingale $(f_n)$ with respect to the filtration $(\mathscr F_n)$
is called \emph{$\nu$-sparse} if $\Delta f_n \neq 0$ and $\Delta f_m\neq 0$ 
either imply that $n=m$ or $|n-m|\geq \nu+1$.
\end{defin}

We choose and fix the positive integer $\ell$ to be the smallest integer  
such that $\ell\geq \ell_0$ (with $\ell_0$ from Lemma~\ref{lem:existence_ell}) and $2^{\ell+1} \geq \dim S + \dim S^2$.

\begin{thm}\label{thm:tilde_P}
	Assume that the siblings in $\mathscr A$ have comparable measure.

	For each $\ell$-sparse smartingale $(f_n)$ (with respect to Lebesgue measure $\mathbb P$)
	 and each sufficiently small $\lambda >0$,
	there exists a probability measure $\tilde{\mathbb P}$ such that $(\tilde{f_n})$
	is a smartingale with respect to $\tilde{\mathbb P}$.
	Additionally, there exists a constant $c>0$ such that 
	 the measure $\tilde{\mathbb P}$ satisfies 
 for each pair of atoms $B\subset A$ the inequalities
\begin{equation}\label{eq:measure_ineq_dim_gtr_1}
\Big(\frac{| B|}{|A|}\Big)^{1+c\lambda}	
\leq\frac{\tilde{\mathbb P}(B)}{\tilde{\mathbb P}(A)} 
\leq  \Big(\frac{|B|}{|A|}\Big)^{1-c\lambda}.
\end{equation}
\end{thm}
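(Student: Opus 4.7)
The plan is to follow the strategy of the one-dimensional case, but with the density at each step defined not merely on the two child atoms, but on a refinement of depth $\ell+1$ below. Here $\ell$ is the fixed integer chosen just before the theorem, guaranteeing both $\ell\geq \ell_0$ and $2^{\ell+1}\geq \dim S+\dim S^2$. The $\ell$-sparsity of $(f_n)$ ensures that between two consecutive levels $n<m$ with $\Delta f_n,\Delta f_m\neq 0$ we have $m-n\geq \ell+1$, so on each $A\in\mathscr F_{n-1}$ the previous density $d_{n-1}$ is constant, and the full $\ell+1$ levels of subdivision below $A$ are available for redefining the density before the next update is needed.

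At a non-trivial level $n$, I would fix $A\in\mathscr F_{n-1}$ with sub-atoms $A_1,\ldots,A_N$ in $\mathscr F_{n+\ell}$ (where $N=2^{\ell+1}$) and seek $d_n|_{A_i}=x_i$. Choosing bases $f_1,\ldots,f_k$ of $S$ and $q_1,\ldots,q_{k'}$ of $S^2$, the defining conditions \eqref{eq:cond} and \eqref{eq:compatibility} become a linear system of $k+k'\leq N$ equations in the unknowns $x_i$, whose coefficient matrix has rows
\[
v_j^i=\int_{A_i}\Delta f_n\, f_j\,\mathrm{d}\mathbb P,\qquad w_j^i=\int_{A_i}q_j\,\mathrm{d}\mathbb P.
\]
At $\lambda=0$, $x_i\equiv d_{n-1}(A)$ is a solution; the main obstacle is to establish a uniform-in-$A$ lower bound on some $(k+k')\times(k+k')$ minor of this matrix, so that Cramer's rule produces, for sufficiently small $\lambda>0$, a unique nearby solution with $x_i/d_{n-1}(A)=1+O(\lambda)$.

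The determinant bound is where the preparatory lemmas play their role. Since $\Delta f_n|_A\in S(A')\oplus S(A'')$ is orthogonal to $S(A)$ by the smartingale property, Lemma~\ref{lem:independent} applied to $h=\Delta f_n$, $f=\sum c_j f_j$, and $q=\sum d_j q_j$ yields a uniform $\alpha<1$ such that $S$-type rows and $S^2$-type rows remain non-parallel even after arbitrary linear combinations within each family. Combined with linear independence within each family separately (the $v_j$'s via Lemma~\ref{lem:existence_ell} and Lemma~\ref{lem:product} applied to $\Delta f_n\sum c_j f_j\in S^2$, the $w_j$'s directly via Lemma~\ref{lem:existence_ell}), one orders a set of $k+k'$ rows into a chain satisfying the hypothesis of Lemma~\ref{lem:det_est}, yielding a determinant bound depending only on the data.

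Once the single-step estimate $d_n(A_i)/d_{n-1}(A)=1+O(\lambda)$ is in hand, the comparability of siblings and the bounded refinement depth imply that all $|A_i|/|A|$ are uniformly bounded away from $0$, so the estimate rewrites as
\[
\Big(\tfrac{|A_i|}{|A|}\Big)^{c\lambda}\leq \frac{d_n(A_i)}{d_{n-1}(A)}\leq \Big(\tfrac{|A_i|}{|A|}\Big)^{-c\lambda}
\]
for some $c>0$ depending only on the data. Telescoping this bound along any nested chain $B\subset\cdots\subset A$ yields \eqref{eq:measure_ineq_dim_gtr_1}. Finally, as in the $\dim S=1$ case, the measures $\tilde{\mathbb P}_n$ are uniformly bounded in total mass, hence admit a weak* limit $\tilde{\mathbb P}$; compatibility \eqref{eq:compatibility} passes to the limit and condition \eqref{eq:cond} ensures that $(\tilde f_n)$ is a smartingale with respect to $\tilde{\mathbb P}$.
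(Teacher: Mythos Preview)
Your overall strategy is correct and matches the paper's: iterate the density construction on the $N=2^{\ell+1}$ sub-atoms at depth $\ell+1$, establish a uniform determinant bound via Lemmas~\ref{lem:existence_ell}--\ref{lem:det_est}, perturb from $\lambda=0$, and telescope. There are, however, two genuine gaps in how you set up and invert the linear system.

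First, the system \eqref{eq:cond}--\eqref{eq:compatibility} has $k+k'\leq N$ equations in $N$ unknowns, so it is underdetermined; there is no ``unique nearby solution'' and Cramer's rule does not apply. The paper resolves this by completing the coefficient matrix to a square $N\times N$ matrix $T$ by appending $N-k-k'$ rows orthonormal to the existing ones, and by fixing the corresponding right-hand side components $z$ so that the constant vector $d_{n-1}(A)(1,\dots,1)^T$ is the solution at $\lambda=0$. Only then does one have a square system to invert and perturb.

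Second, and more importantly, Lemma~\ref{lem:det_est} requires the row vectors to be \emph{unit} vectors and satisfy a quantitative angle condition with the span of later rows; mere linear independence within each family is not enough. The paper's key device here is to choose the bases $b_1,\ldots,b_k$ of $S(A)$ and $q_1,\ldots,q_{k'}$ of $S^2(A)$ to be \emph{orthonormal with respect to the bilinear forms}
\[
(u,v)\mapsto \sum_{j=1}^{N}\Big(\int_{A_j} h u\,\dif\mathbb P\Big)\Big(\int_{A_j} h v\,\dif\mathbb P\Big),\qquad
(u,v)\mapsto \sum_{j=1}^{N}\Big(\int_{A_j} u\,\dif\mathbb P\Big)\Big(\int_{A_j} v\,\dif\mathbb P\Big),
\]
which are positive definite by Lemma~\ref{lem:existence_ell}. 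This choice makes the rows of $T_1$ and of $T_2$ orthonormal in $\mathbb R^N$, so the within-family angle condition in Lemma~\ref{lem:det_est} is trivially satisfied; Lemma~\ref{lem:independent} then supplies the cross-family angle bound. With an arbitrary basis, the rows are neither normalized nor at a uniformly bounded angle from each other, and your chain argument breaks down. (A minor related imprecision: $\Delta f_n\sum c_j f_j$ is \emph{not} in $S^2$ on $A$, only piecewise on $A'$ and $A''$; Lemma~\ref{lem:existence_ell} must be applied on each child separately, as the paper implicitly does.)

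Once the square system with normalized rows is in place, the paper bounds $\|T^{-1}\|$ uniformly, shows $\|\tilde T - T\|\lesssim\lambda$ via \eqref{eq:lower_est_h}, and obtains \eqref{eq:error_estimate} by a Neumann series; the telescoping to \eqref{eq:measure_ineq_dim_gtr_1} and the weak* limit then proceed as you indicate.
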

\begin{proof}
	Let $(i_n)$ be the increasing sequence of numbers such that $\Delta f_{i_n} \neq 0$
	satisfying $i_n - i_{n-1} \geq \ell+1$.
We let $\tilde{\mathbb P}_0 = \mathbb P$ and assume that $\dif \tilde{\mathbb P}_{n-1} = d_{n-1}\dif\mathbb P$ is 
already defined with a density $d_{n-1}$ that is $\mathscr F_{i_n-1}$-measurable.

	Let $A$ be an arbitrary atom of $\mathscr F_{i_n-1}$, 
	 $m = 2^{\ell+1}$ and let $(A_j)_{j=1}^{m}$ be an 
	enumeration of the atoms of $\mathscr F_{i_{n} + \ell}$ that are contained 
	in $A$. 
We now define $\dif\tilde{\mathbb P}_n = d_n \dif\mathbb P$, restricted to the atom $A$
of $\mathscr F_{i_n-1}$
 with the density $d_n$ being of the form 
\begin{equation}\label{eq:dn_density}
	d_n\charfun_A = \sum_{j=1}^{m} d_n^{(j)} \charfun_{A_j},
\end{equation}
for some numbers  $(d_n^{(j)})_{j=1}^m $.
Those numbers are chosen such that the conditions \eqref{eq:cond} and \eqref{eq:compatibility} are satisfied, which we reproduce here for convenience:
\begin{align}\label{eq:main_conditions}
 \int_A \Delta\tilde{f}_{i_n} \cdot f \dif\tilde{\mathbb P}_n = \sum_{j=1}^m d^{(j)}_n\int_{A_j} \Delta\tilde{f}_{i_n} \cdot f \dif\mathbb P &= 0, \qquad f\in S, \\
\label{eq:comp_conditions} \int_A q \dif\tilde{\mathbb P}_n = \sum_{j=1}^m d^{(j)}_n \int_{A_j} q\dif\mathbb P &=
 \int_A q \dif\tilde{\mathbb P}_{n-1} , \qquad q\in S^2.
\end{align}
We analyze this linear system of equations first in the case when $\lambda =0$, in which we 
know the solution $\tilde{\mathbb P}_n = \tilde{\mathbb P}_{n-1}$, or equivalently, for all $j=1,\ldots,m$ 
we have that the value of $d_n^{(j)}$ is the same as the (constant) value $d_{n-1}(A)$ of $d_{n-1}$
on the atom $A$ of $\mathscr F_{i_n-1}$.
To this end, we choose suitable bases of $S(A)$ and $S^2(A)$.
Writing $h = \Delta f_{i_n}$ and $k=\dim S(A)$, we choose an orthonormal basis $b_1,\ldots,b_k$
of $S(A)$ with respect to the bilinear form 
	\begin{equation}\label{eq:bilinear1}
		(u,v)\mapsto \sum_{j=1}^m \big(\int_{A_j} hu \dif\mathbb P\big) \big(\int_{A_j} hv \dif\mathbb P\big).
	\end{equation}
	Next, with $k' = \dim S^2(A)$, choose an orthonormal basis $q_1,\ldots,q_{k'}$ of $S^2(A)$
	with respect to the bilinear form 
	\begin{equation}\label{eq:bilinear2}
		(u,v)\mapsto \sum_{j=1}^m \big(\int_{A_j} u \dif\mathbb P\big) \big(\int_{A_j} v \dif\mathbb P\big).
	\end{equation}
	The choices of the orthonormal bases $b_1,\ldots,b_k$ and $q_1,\ldots,q_{k'}$ are possible since both 
	bilinear forms in \eqref{eq:bilinear1} and \eqref{eq:bilinear2} are positive definite by
	Lemma~\ref{lem:existence_ell}.

Define the matrices 
\[
T_1 = \Big(\int_{A_j} h b_i \dif\mathbb P\Big)_{ 1\leq i\leq k, 1\leq j\leq m},\qquad 
T_2 = \Big( \int_{A_j} q_i \dif\mathbb P \Big)_{ 1\leq i\leq k',1\leq j\leq m}.
\]
Then, with the vector $d = (d_n^{(1)},\ldots, d_n^{(m)} )$ and for $\lambda=0$,
 equation \eqref{eq:main_conditions}  is equivalent to the equation 
$T_1 d = 0$ and equation \eqref{eq:comp_conditions} is equivalent to  $T_2 d = r$
with $r_i = \int_A q_i \dif\tilde{\mathbb P}_{n-1}=d_{n-1}(A) \int_A q_i \dif\mathbb P$ for $i=1,\ldots ,k'$.
Then we let $T$ be the $m\times m$ matrix consisting of all the rows from $T_1$ and $T_2$
and possibly additional arbitrary rows that are orthonormal to the rows in $T_1$ and $T_2$.
Note that $T$, applied to the constant vector $d_{n-1}(A)\cdot (1,\ldots,1)^T$ gives $(0,r,z)^T$
for some $z\in \mathbb R^{m-k-k'}$ so that  $|(0,r,z)|\lesssim d_{n-1}(A)$.
Now, combining Lemma~\ref{lem:independent} and 
Lemma~\ref{lem:det_est} gives the inequality 
\[
		|\det T| \geq \alpha' := (1-\alpha^2)^{m/2} > 0.
\]
Since all the rows in $T$ are normalized, we have for all square submatrices $T'$ of $T$ that 
\[
		|\det T'| \leq 1.
\]
This gives a uniform upper bound of $\|T^{-1}\|$, since  
the entries of $T^{-1}$ are quotients of a minor of $T$ and 
the determinant of $T$, multiplied with some sign.

Now we look at the changes of the just defined linear system of 
equations when $\lambda >0$. The changes concern 
the matrix $T_1$. 
Set
	\[
		\tilde T_1 = T_1 - \lambda \Big( \int_{A_j} p_n b_i \dif\mathbb P \Big)_{1\leq i\leq k, 1\leq j\leq m}.
	\]
	Moreover, let $\tilde{T}$ be the matrix $T$ with $T_1$ exchanged by $\tilde{T}_1$.
	Then, equation \eqref{eq:main_conditions} is equivalent to $\tilde T_1 d = 0$ and 
	\eqref{eq:comp_conditions} is equivalent to $T_2 d = r$.

	We now estimate the difference $\Delta T = T - \tilde{T}$. Since by definition, the 
	functions $b_i$ satisfy $1 = \sum_{j=1}^m \big( \int_{A_j} h b_i \dif\mathbb P\big)^2$, 
	we obtain 
	\begin{equation}\label{eq:lower_est_h}
				1 \gtrsim \sum_{j=1}^m \Big | \int_{A_j} h b_i \dif\mathbb P\Big| 
				\gtrsim \sum_{B\in \{A',A''\}} \int_B |h b_i| \dif\mathbb P
				\gtrsim \sum_{B\in\{A',A''\}} \|h\|_B \|b_i\|_{L^1(B)},
	\end{equation}
	where we used Lemmas~\ref{lem:existence_ell} and \ref{lem:product}, respectively. On the other 
	hand, for each $i=1,\ldots,k$,
	\begin{align*}
		\sum_{j=1}^m |(\Delta T)_{ij}| &= \lambda \sum_{j=1}^m \Big| \int_{A_j} p_n b_i\dif\mathbb P\Big|
		\leq \lambda\sum_{j=1}^m \int_{A_j} |p_n b_i|\dif \mathbb P =\lambda \sum_{ B\in \{A',A''\}} \int_B |p_nb_i|\dif\mathbb P \\
		&\leq\lambda \sum_{B\in\{A',A''\}} \|p_n\|_B \|b_i\|_{L^1(B)} \lesssim \lambda\sum_{B\in \{A',A''\}} \|h\|_B \| b_i\|_{L^1(B)}.
	\end{align*}
	This gives, together with \eqref{eq:lower_est_h},
	\begin{equation}\label{eq:Delta_bound}
		\| \Delta T\| \lesssim \lambda \|T\|.
	\end{equation}
	Since $\tilde{T}^{-1} = (\operatorname{Id} - T^{-1}\Delta T)^{-1} T^{-1}$ and by the 
	uniform upper bound of $\|T^{-1}\|$, inequality \eqref{eq:Delta_bound} implies that 
	for $\lambda$ sufficiently small we have $\|T^{-1}  - \tilde{T}^{-1}\| \lesssim \lambda$. 
	The vector $d = \tilde{T}^{-1} (0,r,z)^T$ satisfies conditions \eqref{eq:main_conditions}
	and \eqref{eq:comp_conditions} and we know that $T^{-1} (0,r,z)^T = d_{n-1}(A)(1,\ldots,1)^T$,
	which gives 
	\begin{equation}\label{eq:error_estimate}
		| d - d_{n-1}(A)(1,\ldots,1) | = | (\tilde{T}^{-1} - T^{-1})(0,r,z)^T | 
		\lesssim \lambda | (0,r,z)| \lesssim \lambda\cdot d_{n-1}(A).
	\end{equation}
	For sufficiently small $\lambda$ we therefore obtain that $d$ consists of positive entries 
	resulting in a positive measure $\tilde{\mathbb P}_n$.
	Since $(f_n)$ is $\ell$-sparse, we obtain that the 
	density $d_n$ of $\tilde{\mathbb P}_n$ with respect to $\mathbb P$ 
	 is $\mathscr F_{i_{n+1}-1}$-measurable.

	We now come to the estimates. We have 
	$\tilde{\mathbb P}_n(A) = \tilde{\mathbb P}_{n-1}(A)= d_{n-1}(A) |A|$ and $\tilde{\mathbb P}_n(A_j) = d_j |A_j|$
	by condition \eqref{eq:comp_conditions} and the fact that the constant function is contained in $S$.
	Using \eqref{eq:error_estimate}, this gives for all $B=A_j$ and some constant $C'$,
	\begin{equation}\label{eq:measure_ineq}
		 \frac{\tilde{\mathbb P}_n(B)}{|B|} =d_j \leq (1+C'\lambda)d_{n-1}(A) =  (1+C'\lambda)\frac{\tilde{\mathbb P}_n (A)}{|A|}.
	\end{equation}
	Since siblings in $\mathscr A$ have comparable measure, there exists a constant 
	$R > 1$ such that $|A| / |B| \geq R$ and the elementary inequality $e^t \geq 1+t$ gives 
	some constant  $c>0$ such that $1+C'\lambda \leq (|A| / |B|)^{c\lambda}$.
	Together with \eqref{eq:measure_ineq} this yields the right hand side of 
	\eqref{eq:measure_ineq_dim_gtr_1}.
	The left hand side of \eqref{eq:measure_ineq_dim_gtr_1} is proved similarly by using 
	the inequality $e^t \leq (1-t)^{-1}$ for $t<1$.

	 From the sequence $(\tilde{\mathbb P}_n)$ we now pass to the limit
	 as in the proof of the case $\dim S = 1$ to 
	  obtain the desired measure $\tilde{\mathbb P}$ satisfying the same estimates
	  as $\tilde{\mathbb P}_n$.
\end{proof}

\begin{rem} We remark that the measure $\tilde{\mathbb P}$ in Theorem~\ref{thm:tilde_P} satisfies 
inequality \eqref{eq:L1Linfty} with the same space $S$ and the same atoms $\mathscr A$ 
as the Lebesgue measure $\mathbb P$.
In order to see this we fix an atom $A\in\mathscr A_j$ for some $j$ and a function $f\in S$.
Arguing as in the proof of Lemma~\ref{lem:existence_ell}, there exists an atom $B\in \mathscr A_{j+\ell}$
with $B\subset A$ such that $|f| \geq \|f\|_A / 2$ on $B$.
Inequality \eqref{eq:measure_ineq_dim_gtr_1} and the fact that siblings in $\mathscr A$ have 
comparable (Lebesgue) measure now give inequality \eqref{eq:L1Linfty} for the 
measure $\tilde{\mathbb P}$.
\end{rem}

\section{Martingales from smartingales}\label{sec:lil_smart}
In this section we are going to prove a LIL estimate for smartingales
on a probability space $(\Omega,\mathscr F, \mu)$ satisfying \eqref{eq:L1Linfty} 
by comparing them to certain martingales. 
Our applications in Section~\ref{sec:appl} will be concerned with using those results with respect to the 
measures $\tilde{\mathbb P}$ constructed in Section~\ref{sec:tilde_measure}.

For martingales $(M_n)$ and a filtration $(\mathscr F_n)$, we have Freedman's inequality \cite{Freedman1975} 
 about their square functions
$S_n^2 = \sum_{j\leq n} \mathbb E_{j-1} |\Delta M_j|^2$, which reads as follows.
\begin{lem}\label{lem:mart_exp}
	Let $(M_n)$ be a real martingale with bounded differences $|\Delta M_n|\leq L$ and  $ M_0 = 0$. For $a>0$, let 
	\[
		\tau_a = \inf\{ n : M_n \geq a\}.	
	\]
	Then, for any $b>0$, we have 
	\[
		\mu( S_{\tau_a}^2 \leq b, \tau_a < \infty) \leq \exp\Big( -\frac{a^2}{2(L a+b)}\Big).
	\]
\end{lem}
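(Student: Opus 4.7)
The plan is to follow the classical exponential-supermartingale route of Bennett and Freedman. First I would establish the pointwise inequality
\begin{equation*}
e^{\lambda x} \leq 1 + \lambda x + \psi(\lambda)\, x^2, \qquad |x| \leq L,\ \lambda > 0,
\end{equation*}
with $\psi(\lambda) := (e^{\lambda L} - 1 - \lambda L)/L^2$. This reduces to showing that the quotient $(e^{\lambda x} - 1 - \lambda x)/x^2$ is maximized on $[-L,L]$ at $x = L$, a short calculus exercise (the endpoint comparison at $x = -L$ uses $2\sinh(\lambda L) \geq 2\lambda L$). Taking $\mathbb{E}_{k-1}$, using $\mathbb{E}_{k-1}\Delta M_k = 0$ and $1 + t \leq e^t$, yields
\begin{equation*}
\mathbb{E}_{k-1}\bigl[e^{\lambda \Delta M_k}\bigr] \leq \exp\bigl(\psi(\lambda)\,\mathbb{E}_{k-1}(\Delta M_k)^2\bigr),
\end{equation*}
so that $U_n := \exp(\lambda M_n - \psi(\lambda) S_n^2)$ is a nonnegative supermartingale with $U_0 = 1$.

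Next I would exploit the predictability of the square function. Since $S_{n+1}^2$ is $\mathscr F_n$-measurable, $\sigma := \inf\{n \geq 0 : S_{n+1}^2 > b\}$ is a stopping time, and on the event $E := \{\tau_a < \infty,\ S_{\tau_a}^2 \leq b\}$ one has $\tau_a \leq \sigma$. Setting $\tau := \tau_a \wedge \sigma$ then gives $M_\tau \geq a$ and $S_\tau^2 \leq b$ on $E$. Optional stopping at $\tau \wedge N$ yields $\mathbb{E}[U_{\tau \wedge N}] \leq 1$; restricting this expectation to $E \cap \{\tau_a \leq N\}$, on which $U_{\tau \wedge N} \geq \exp(\lambda a - \psi(\lambda) b)$, and letting $N \to \infty$ produces the Chernoff-type bound
\begin{equation*}
\mu(E) \leq \exp\bigl(-\lambda a + \psi(\lambda)\, b\bigr).
\end{equation*}

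To conclude, I would use the termwise estimate $e^u - 1 - u = \sum_{k \geq 2} u^k/k! \leq \sum_{k \geq 2} u^k/2 = u^2/(2(1-u))$, valid for $0 \leq u < 1$, so that $\psi(\lambda) \leq \lambda^2/(2(1-\lambda L))$ whenever $\lambda L < 1$. The choice $\lambda := a/(b + La)$ forces $\lambda L = La/(b+La) < 1$ and $1 - \lambda L = b/(b+La)$; a direct substitution gives $\lambda a - \psi(\lambda) b \geq a^2/(2(b+La))$, which is exactly the asserted inequality. The main obstacle is securing the variance-sensitive constant $\psi(\lambda)$ in the exponential bound: a cruder Hoeffding-type estimate with $\lambda^2 L^2/2$ in place of $\psi(\lambda)\,\mathbb{E}_{k-1}(\Delta M_k)^2$ would destroy the $b$-dependence in the final exponent, whereas the present form is tailored precisely so that the predictable stopping rule $\sigma$ can truncate $S^2$ at level $b$.
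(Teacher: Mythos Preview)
Your proof is correct and is precisely the classical argument of Freedman. The paper does not supply a proof of this lemma at all; it simply states the result and cites Freedman's 1975 paper as its source. Your write-up faithfully reconstructs that argument: the exponential supermartingale $U_n = \exp(\lambda M_n - \psi(\lambda)S_n^2)$ built from the Bennett-type bound $e^{\lambda x}\le 1+\lambda x+\psi(\lambda)x^2$ on $[-L,L]$, the predictable stopping time $\sigma$ truncating $S^2$ at level $b$, optional stopping, and the final optimization in $\lambda$. One small remark on your calculus aside: comparing the two endpoints via $2\sinh(\lambda L)\ge 2\lambda L$ is not by itself enough, since one must also exclude interior maxima; the cleanest route is the standard fact that $u\mapsto (e^u-1-u)/u^2$ is increasing on all of $\mathbb{R}$, which immediately gives the maximum at $x=L$.
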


An inequality of this type implies a (LIL type) upper estimate of the following sort
if $R_n=M_n$ and $Q_n = S_n^2$. In fact, it is not essential here that we consider 
martingales and their square functions, but only random variables $R_n, Q_n$ that 
are related by an inequality of the type \eqref{eq:axiom_exp} below.
\begin{lem}\label{lem:exp_implies_lil}
	Let $(R_n)$ a sequence of real random variables and 
	 $(Q_n)$ a  sequence of real random variables adapted to $(\mathscr F_n)$ 
	with $Q_n\geq Q_{n-1}\geq 0$ for each $n$,
	 satisfying for some $r,L>0$ and for each $a,b>0$ with $a$ sufficiently large
	 \begin{equation}\label{eq:axiom_exp}
		\mu( Q_{\tau_a} \leq b, \tau_a < \infty)	\leq \exp\Big( - \frac{a^2}{r(L a+b)}\Big),
	 \end{equation}
	with $\tau_a = \inf \{ n : R_n \geq a\}$.

	Then, 
	\[
		\limsup_{n\to\infty} \frac{R_n}{\sqrt{r Q_n \log\log Q_n}}	\leq 1,\qquad \mu\text{-a.s. on } \{ Q_\infty =\infty\},
	\]
	where we write $Q_\infty = \sup_n Q_n$.
\end{lem}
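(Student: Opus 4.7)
The plan is to execute the standard LIL upper-bound argument: stratify the range of $Q_n$ geometrically, apply the hypothesis \eqref{eq:axiom_exp} on each stratum, and finish by Borel--Cantelli. Fix parameters $\theta > 1$ and $\delta > 0$, both to be sent to $1$ and $0$ through a countable sequence at the end, and set
\[
a_k := (1+\delta)\sqrt{r\,\theta^{k+1}\log\log\theta^k}, \qquad b_k := \theta^{k+1}, \qquad k\geq k_0.
\]

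First I would observe that the event
\[
F_k := \{\exists\, n : Q_n \leq \theta^{k+1} \text{ and } R_n \geq a_k\}
\]
is contained in $\{\tau_{a_k} < \infty,\ Q_{\tau_{a_k}} \leq b_k\}$: since $Q_n$ is nondecreasing, the first time $R_n$ crosses $a_k$ occurs no later than any $n$ witnessing $F_k$, and at that earlier time $Q$ is still bounded by $\theta^{k+1}$. Applying \eqref{eq:axiom_exp} with $a = a_k$ and $b = b_k$ yields
\[
\mu(F_k) \leq \exp\Bigl(-\frac{a_k^2}{r(La_k + b_k)}\Bigr).
\]
Since $a_k = O(\sqrt{\theta^k\log k})$, the term $La_k$ is negligible compared to $b_k = \theta^{k+1}$ for $k$ large; a direct computation gives, for all sufficiently large $k$,
\[
\frac{a_k^2}{r(La_k + b_k)} \geq (1+\tfrac{\delta}{2})^{2}\log\log\theta^k = (1+\tfrac{\delta}{2})^{2}\log(k\log\theta).
\]
Hence $\mu(F_k) \lesssim k^{-(1+\delta/2)^2}$, which is summable because $(1+\delta/2)^2 > 1$, and Borel--Cantelli gives that $\mu$-almost surely only finitely many $F_k$ occur.

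To conclude, I would work on $\{Q_\infty = \infty\}$ and, for each large $n$, pick the unique $k = k(n)$ with $\theta^k \leq Q_n < \theta^{k+1}$; note $k(n) \to \infty$. Outside the almost sure null set on which infinitely many $F_k$ occur, we have $R_n < a_k$ for every $n$ with $k(n)$ large, and since $\log\log$ is monotone,
\[
R_n < (1+\delta)\sqrt{\theta}\sqrt{r\theta^k \log\log\theta^k} \leq (1+\delta)\sqrt{\theta}\sqrt{r Q_n \log\log Q_n}.
\]
Thus $\limsup_n R_n / \sqrt{rQ_n\log\log Q_n} \leq (1+\delta)\sqrt{\theta}$ almost surely on $\{Q_\infty = \infty\}$, and sending $\delta \to 0$ and $\theta \to 1$ along a countable sequence delivers the claim.

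The only genuinely delicate step is the quantitative verification that the exponent in \eqref{eq:axiom_exp} really does beat $\log(k\log\theta)$ with a factor strictly above $1$; this requires choosing $k_0$ large enough depending on $L,r,\delta,\theta$ so that $La_k \leq (\delta/4)\,b_k$, and then expanding the ratio. Everything else is bookkeeping around Borel--Cantelli and the monotonicity of $Q_n$ and of $\log\log$.
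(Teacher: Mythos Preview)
Your argument is correct and follows essentially the same route as the paper's proof: geometric stratification of the range of $Q_n$ (your $\theta^k$ is the paper's $q^k$), containment of the bad events in $\{Q_{\tau_a}\leq b,\ \tau_a<\infty\}$ via monotonicity of $Q_n$, application of \eqref{eq:axiom_exp} to obtain a summable bound $\lesssim k^{-p}$ with $p>1$, and Borel--Cantelli. The only cosmetic difference is that the paper introduces the hitting times $\sigma_k=\inf\{n:Q_n\geq q^k\}$ and phrases the bad events via the window $n\in[\sigma_k,\sigma_{k+1})$, whereas you bypass $\sigma_k$ and encode the same stratum directly through the condition $Q_n\leq\theta^{k+1}$; the resulting inclusions and estimates are identical.
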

The proof of this result is just a minor modification of the proof of
Theorem (6.1) in \cite{Freedman1975}.
\begin{proof}
	Let $q > 1$ and denote, for $\varepsilon > 0$,
	\[
		A = \{ Q_\infty = \infty, R_n > q \sqrt{(r+\varepsilon) Q_n\log\log Q_n}\text{ for infinitely many $n$}	\}.
	\]
	Introduce the $(\mathscr F_n)$ stopping times 
	\[
		\sigma_k = \inf \{ n : Q_n \geq q^k\}
	\]
	and the events 
	\[
		A_k = \Big\{ Q_\infty = \infty, \text{there exists } n\in [\sigma_k,\sigma_{k+1}) : R_n > q \sqrt{(r+\varepsilon) Q_n\log\log Q_n} \Big\}.
	\]
	We have $A \subset \limsup_k A_k = \cap_n \cup_{k\geq n} A_k$.
	Define 
	\[
		a = q \sqrt {(r+\varepsilon)q^k \log\log q^k},\qquad b = q^{k+1}.
	\]
	This definition gives 
	\[
		A_k \subset \{ Q_{\tau_a} \leq b, \tau_a < \infty\}.
	\]
	By definition of the stopping time $\sigma_k$ and the fact that $(Q_n)$ is increasing, 
	we know that for $m < \sigma_{k+1}(\omega)$ we have $Q_m(\omega) \leq q^{k+1}$.
	Therefore, by \eqref{eq:axiom_exp}, if $k$ sufficiently large,
	\begin{align*}
		\mu(A_k) \leq \mu(Q_{\tau_a}\leq b, \tau_a < \infty) \leq 
		\exp\Big(-\frac{a^2}{r(L a+b)}\Big) \lesssim  \big(\log(q^k)\big)^{-q}
		\leq \frac{c_q}{k^{q}}
	\end{align*}
	for some constant $c_q$ independent of $k$.
	Since $q>1$, this gives 
	\[
		\mu(A) \leq \sum_{k\geq n} \mu(A_k)\to 0,\qquad \text{as }n\to\infty.	
	\]
	Letting $\varepsilon\to 0$ and $q\to 1$ yields the assertion of the lemma.
\end{proof}

The rest of this section is devoted to proving estimate \eqref{eq:axiom_exp} for smartingales 
and their square functions via its corresponding martingale result.
We begin by associating to each smartingale a certain martingale.

\begin{prop}
	Let $(f_n)$ be a smartingale, $g_n \in\operatorname{ran} P_{n-1}$ and $\varphi_n$ an $\mathscr F_{n-1}$-measurable function.

	Then, the sequence
	\[
		\Delta M_n := \mathbb E_n(g_n f_n) - \varphi_n \mathbb E_n(g_n f_{n-1}) - (1-\varphi_n)\mathbb E_{n-1}(g_n f_{n-1})
	\]
	is a martingale difference sequence with respect to the same filtration.
\end{prop}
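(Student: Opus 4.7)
The plan is to verify the defining property of a martingale difference, namely $\mathbb E_{n-1}(\Delta M_n) = 0$, by a direct calculation using the tower property, the fact that $\varphi_n$ is $\mathscr F_{n-1}$-measurable, and the smartingale relation $P_{n-1} f_n = f_{n-1}$. There is nothing to optimize here, so a plain substitution and simplification should suffice.

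First, I would apply $\mathbb E_{n-1}$ to each of the three summands defining $\Delta M_n$. For the first term, the tower property gives $\mathbb E_{n-1}(\mathbb E_n(g_n f_n)) = \mathbb E_{n-1}(g_n f_n)$. For the second term, since $\varphi_n\in\mathscr F_{n-1}$, it can be pulled out, and then the tower property again yields $\mathbb E_{n-1}(\varphi_n\mathbb E_n(g_n f_{n-1})) = \varphi_n\mathbb E_{n-1}(g_n f_{n-1})$. For the third term, $(1-\varphi_n)\mathbb E_{n-1}(g_n f_{n-1})$ is already $\mathscr F_{n-1}$-measurable, so taking $\mathbb E_{n-1}$ does nothing. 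Adding these three contributions with the correct signs, the $\varphi_n$ and $1-\varphi_n$ pieces combine cleanly into a single copy of $\mathbb E_{n-1}(g_n f_{n-1})$, leaving
\[
\mathbb E_{n-1}(\Delta M_n) = \mathbb E_{n-1}(g_n f_n) - \mathbb E_{n-1}(g_n f_{n-1}).
\]

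The second step is to argue that this difference vanishes. Here I use that $g_n\in\operatorname{ran}P_{n-1}=S_{n-1}$ and that $P_{n-1}$ is the $L^2$ orthoprojector onto $S_{n-1}$. The smartingale identity $P_{n-1} f_n = f_{n-1}$ translates into $f_n - f_{n-1}$ being $L^2$-orthogonal to every element of $S_{n-1}$, and this orthogonality localizes to each atom $A\in\mathscr A_{n-1}$ (because $S_{n-1}$ is closed under multiplication by $\charfun_A$). Testing the orthogonality with $g_n\charfun_A\in S_{n-1}$ gives $\int_A g_n(f_n - f_{n-1})\dif\mathbb P = 0$ for every such $A$, which is exactly $\mathbb E_{n-1}(g_n f_n) = \mathbb E_{n-1}(g_n f_{n-1})$. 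Combining with the previous display finishes the proof.

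The proposition should be essentially a one-liner once the bookkeeping is done, so I do not anticipate any genuine obstacle; the only point that deserves emphasis is the localization of the orthogonality $P_{n-1} f_n = f_{n-1}$ to individual atoms, since $g_n$ is not a global element of $S$ but only piecewise so on $\mathscr F_{n-1}$-atoms. This is why the hypothesis $g_n\in\operatorname{ran}P_{n-1}$ (rather than $g_n\in S$) is the right one, and why the construction tolerates the multiplier $\varphi_n\in\mathscr F_{n-1}$ in front of one of the two compensating terms.
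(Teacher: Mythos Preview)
Your proof is correct and follows essentially the same approach as the paper: both reduce $\mathbb E_{n-1}(\Delta M_n)$ to the difference $\mathbb E_{n-1}(g_n f_n) - \mathbb E_{n-1}(g_n f_{n-1})$ via the tower property and the predictability of $\varphi_n$, and then verify this difference vanishes using that $g_n\charfun_A\in\operatorname{ran}P_{n-1}$ on each atom $A\in\mathscr A_{n-1}$ together with the smartingale relation $P_{n-1}f_n=f_{n-1}$. The paper presents the key identity $\mathbb E_{n-1}(g_n f_n)=\mathbb E_{n-1}(g_n f_{n-1})$ first and then does the $\varphi_n$ bookkeeping, whereas you do it in the opposite order, but the content is the same.
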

\begin{proof}
We just have to verify that $\mathbb E_{n-1} (\Delta M_n) = 0$. Indeed, we first remark that 
\begin{equation}\label{eq:simple}
	\mathbb E_{n-1}(g_n f_n) = \mathbb E_{n-1}(g_n P_{n-1}f_n)	= \mathbb E_{n-1}(g_n f_{n-1}),
\end{equation}
where we used that $g_n\charfun_A$ is contained in the range of $P_{n-1}$
for each $A\in\mathscr A_{n-1}$, the self-adjointness of $P_{n-1}$, and  
that $(f_n)$ is a smartingale.
Then we calculate
\begin{align*}
	\mathbb E_{n-1} (\Delta M_n) &= \mathbb E_{n-1}(g_n f_n) - \varphi_n \mathbb E_{n-1}(g_n f_{n-1}) - (1-\varphi_n)\mathbb E_{n-1}(g_n f_{n-1}) \\
	&=\mathbb E_{n-1}(g_n f_n) - \mathbb E_{n-1}(g_n f_{n-1}) =0,
\end{align*}
where we used the predictability of $(\varphi_n)$ and \eqref{eq:simple}, respectively.
\end{proof}

In what follows we will use the case $\varphi_n=0$ for all $n$.
Moreover, we choose $g_n = g$ for each $n$, where $g\in S$ is the function $g : \Omega\to [c_3,1]$
with the positive constant $c_3$.
Then, by \eqref{eq:simple},
		\[
			\Delta M_n = \mathbb E_n (g f_n) - \mathbb E_{n-1}(g f_{n-1})	= (\mathbb E_n-\mathbb E_{n-1}) (g f_n)
		\]
		and therefore 
		\[
			M_n = \mathbb E_n(g f_n).
		\]
Correspondingly, for a smartingale $(f_n)$ and its associated martingale $(M_n)$, 
denote the square functions 
\begin{equation}\label{eq:def_square_functions}
	\begin{aligned}
S^2_n &= \sum_{k\leq n} (\mathbb E_k + \mathbb E_{k-1})(\Delta f_k)^2,&\qquad S_{M,n}^2 &= \sum_{k\leq n} \mathbb E_{k-1}|\Delta M_k|^2, \\
S^2 &= \sup_n S^2_n,&\qquad  S_{M}^2 &= \sup_n S_{M,n}^2.
	\end{aligned}
\end{equation}

As already mentioned, we want to show the subgaussian estimate~\eqref{eq:axiom_exp} 
for smartingales, i.e. with the choice $R_n = f_n$ and $Q_n = S_n^2$. Since this subgaussian 
estimate is known for martingales, i.e. for the choice $R_n = M_n$ and $Q_n = S_{M,n}^2$, 
the plan is the following:
\begin{enumerate}
	\item Give the pointwise inequality $S_M^2 \lesssim S^2$ between square functions.
			This is done in Theorem~\ref{thm:pw}, with the help of Lemmas~\ref{lem:bounded}, 
			\ref{lem:fat_chains}, \ref{lem:lipschitz}, and~\ref{lem:basic_estimate}.
	\item Show the inclusion $\{ \tau_{S,a} < \infty\} \subseteq \{\tau_{M,\beta a} < \infty \}$ 
		 for some positive constant $\beta$, where we put $\tau_{S,a} = \inf \{ n : f_n \geq a\}$ 
		 and $\tau_{M,a} = \inf \{ n : M_n \geq a\}$. This is performed in Proposition~\ref{prop:stop_comp}.
\end{enumerate}
Those steps are combined in Proposition~\ref{prop:exp_ineq_smart} to deduce from the 
subgaussian estimate for martingales its smartingale variant.
Applying then Lemma~\ref{lem:exp_implies_lil} gives us the upper estimate 
for the LIL for smartingales, finally summarized in Theorem~\ref{thm:lil_smartingale}.

For general smartingale differences we have the following relation between its values:
\begin{lem}\label{lem:bounded}
For smartingales $(f_n)$ we have the inequality
	\[
		\|\Delta f_n\|_{A''}	\lesssim \|\Delta f_n\|_{A'} \frac{\mu(A')}{\mu(A)}, \qquad A\in \mathscr A_{n-1}.
	\]
\end{lem}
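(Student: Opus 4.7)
The plan is to use the smartingale orthogonality $\int_A \Delta f_n\cdot g\dif\mu = 0$, valid for every $g\in S$ and every $A\in\mathscr A_{n-1}$ since $g\charfun_A\in S_{n-1}$, with the test function $g$ taken to be the very polynomial whose $L^\infty$-norm we need to bound. First I would write $\Delta f_n|_A = h_1 \charfun_{A'} + h_2 \charfun_{A''}$ with $h_1,h_2\in S$, which is legitimate because $f_n$ and $f_{n-1}$ are piecewise in $S$ with respect to $\mathscr F_n$ and $\mathscr F_{n-1}$, and any polynomial defined on one atom admits a global extension in $S$.

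Substituting $g = h_2$ into the orthogonality relation then produces
\[
	\int_{A''}h_2^2\dif\mu \;=\; -\int_{A'}h_1 h_2\dif\mu.
\]
Applying \eqref{eq:L1Linfty} to $h_2\in S$ on $A''$ yields a set of $\mu$-measure at least $c_2\mu(A'')$ on which $|h_2|\geq c_1\|h_2\|_{A''}$, so the left-hand side is bounded below by a constant multiple of $\|h_2\|_{A''}^2\mu(A'')$. The trivial estimate controls the right-hand side by $\|h_1\|_{A'}\|h_2\|_{A'}\mu(A')$.

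The remaining ingredient is the bound $\|h_2\|_{A'}\lesssim \|h_2\|_{A''}$. Since $\|h_2\|_{A'}\leq \|h_2\|_A$, Remez' inequality \eqref{eq:remez} gives $\|h_2\|_A\leq (C|A|/|A''|)^n\|h_2\|_{A''}$; under the standing hypothesis that siblings in $\mathscr A$ have comparable Lebesgue measure (inherited in the applications from the construction of $\tilde{\mathbb P}$), the factor $|A|/|A''|$ is uniformly bounded and one indeed obtains $\|h_2\|_{A'}\lesssim \|h_2\|_{A''}$. Combining the two bounds yields
\[
\|h_2\|_{A''}^2\mu(A'') \;\lesssim\; \|h_1\|_{A'}\|h_2\|_{A''}\mu(A'),
\]
and after dividing by $\|h_2\|_{A''}$ and invoking $\mu(A'')\geq \mu(A)/2$ (which follows from the filtration convention $\mu(A'')\geq\mu(A')$), the asserted inequality follows.

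The main obstacle I anticipate is exactly the Remez-type step: without some form of Lebesgue comparability between the two siblings, there is no straightforward way to control the polynomial $h_2$ on the smaller sibling by its values on the larger. An alternative approach, such as replacing the test function $h_2$ by some $g\in S$ that mimics $\sgn h_2$ on $A''$, or working with a dual characterization of $\|h_2\|_{A''}$ as a linear functional on $S$, seems to run into essentially the same geometric obstruction; the comparability of siblings is what makes this part of the argument go through cleanly.
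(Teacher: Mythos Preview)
Your approach---testing the orthogonality relation against the polynomial extension $h_2\in S$ of $\Delta f_n|_{A''}$---is genuinely different from the paper's, which simply imports from \cite{part1} the normalized bounds $\|\Delta f_n\|_{A'}\simeq \mu(A')^{-1/2}$ and $\|\Delta f_n\|_{A''}\lesssim \mu(A')^{1/2}/\mu(A)$ (for $\|\Delta f_n\|_{L^2(A)}=1$) and combines them. Those bounds are obtained in \cite{part1} under the sole structural hypothesis \eqref{eq:L1Linfty}, with no comparability of siblings and no appeal to Remez.

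Your self-diagnosis of the obstruction is slightly off. When $\mu$ is Lebesgue measure you do \emph{not} need comparable siblings: the filtration convention $|A''|\geq|A'|$ already forces $|A|/|A''|\leq 2$, so \eqref{eq:remez} yields $\|h_2\|_{A}\lesssim\|h_2\|_{A''}$ with an absolute constant and your argument goes through unchanged. The genuine gap is that the lemma is stated, and later used throughout Section~\ref{sec:lil_smart}, for a general measure $\mu$ satisfying only \eqref{eq:L1Linfty}; there the ordering convention reads $\mu(A'')\geq\mu(A')$, which says nothing about the Lebesgue ratio $|A|/|A''|$, while \eqref{eq:remez} is a statement about Lebesgue measure. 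In that generality the step $\|h_2\|_{A'}\lesssim\|h_2\|_{A''}$ is not available from your argument, and assuming sibling comparability away would defeat the purpose of the lemma, which is precisely to feed into the fat-chain machinery that handles highly unbalanced splits.
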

\begin{proof}
	It is enough to show this for $\Delta f_n$ with $\|\Delta f_n\|_{L^2(A)} = 1$.
	In that case we have shown in \cite{part1} the inequalities 
	\[
		\|\Delta f_n\|_{A'}	\simeq \mu(A')^{-1/2}, \qquad \|\Delta f_n\|_{A''} \lesssim \frac{\mu(A')^{1/2}}{\mu(A)},
	\]
	which immediately give the assertion.
\end{proof}

\begin{defin}
	Let $\mathscr X=(X_i)_{i=1}^n$ be a decreasing sequence of atoms in $\mathscr A$.
	We say that $\mathscr X$ is a \emph{full chain}, if we have 
	\[
		X_{i+1} \in \{ X_i', X_i''\}\text{ for all $i=1,\ldots,n-1$}.
	\]
	In this definition, $n$ is allowed to be equal to $\infty$.

	If we additionally assume $n<\infty$ and we 
	have the condition $\mu(X_n) \geq  \mu(X_1)/2$,
	we say that 
	$\mathscr X$ is a \emph{fat full chain}.

	For a full chain $\mathscr X = (X_i)_{i=1}^n$, we denote by $\mathscr X^*$ 
	the full chain $\mathscr X^* = (X_i)_{i=2}^n$ with the first atom removed.
\end{defin}

With this terminology we see that every full chain can be decomposed 
into the union of fat full chains in the following way.
\begin{lem}\label{lem:fat_chains}
	Every full chain $\mathscr X$ can be decomposed into 
	the union of  
	fat full chains $(\mathscr X_j)$ satisfying
	\begin{equation}\label{eq:chains}
		\max_{A\in\mathscr X_{s+1}}\mu(A) < \frac{1}{2}\max_{A\in\mathscr X_{s}} \mu(A),\qquad
		s = 1,\ldots,k-1.	
	\end{equation}
\end{lem}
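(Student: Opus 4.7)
The plan is to build the decomposition greedily along the chain, cutting off a new block each time the measure has dropped by more than a factor of two from the start of the current block. Concretely, I would set $i_0 := 0$ and inductively define $i_s$ as the largest index $i > i_{s-1}$ (with $i \leq n$) satisfying $\mu(X_i) \geq \tfrac12 \mu(X_{i_{s-1}+1})$, and then take $\mathscr X_s := (X_{i_{s-1}+1}, X_{i_{s-1}+2}, \dots, X_{i_s})$. The successive $\mathscr X_s$'s are consecutive sub-chains of $\mathscr X$ and hence are themselves full chains (each step still picks one of the two siblings), and their union recovers $\mathscr X$.

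Two immediate observations do all the work. First, along any full chain the sequence $\mu(X_i)$ is non-increasing, because $X_{i+1} \in \{X_i', X_i''\} \subseteq X_i$; consequently the maximum of $\mu$ on any block is realized at its first atom, namely $\max_{A\in\mathscr X_s}\mu(A) = \mu(X_{i_{s-1}+1})$. Second, by the very maximality of $i_s$, whenever $X_{i_s+1}$ exists we have $\mu(X_{i_s+1}) < \tfrac12 \mu(X_{i_{s-1}+1})$. Since $X_{i_s+1}$ is the first atom of $\mathscr X_{s+1}$, combining the two observations gives
\[
\max_{A\in\mathscr X_{s+1}}\mu(A) = \mu(X_{i_s+1}) < \tfrac12 \mu(X_{i_{s-1}+1}) = \tfrac12 \max_{A\in\mathscr X_s}\mu(A),
\]
which is precisely the decay condition \eqref{eq:chains}. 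Fatness of each $\mathscr X_s$ is built in: its last atom $X_{i_s}$ satisfies $\mu(X_{i_s}) \geq \tfrac12 \mu(X_{i_{s-1}+1})$ by the definition of $i_s$, and its first atom is $X_{i_{s-1}+1}$.

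The only point that requires a second of thought is that the construction terminates properly. If $\mathscr X$ is finite the greedy process clearly stops after finitely many steps. If $\mathscr X$ is infinite one needs $i_s < \infty$ for every $s$, i.e.\ that the measures drop below each threshold eventually, which follows from $\mu(X_i) \to 0$; this in turn is inherited from the standing assumption $\max_{A\in\mathscr A_n}\diam A \to 0$ together with inequality \eqref{eq:L1Linfty} relating $\mu$ to diameter. Apart from this bookkeeping there is no real obstacle: the lemma is a clean greedy partition argument, and the required decay factor is exactly the threshold built into the stopping rule.
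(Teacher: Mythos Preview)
Your argument is correct and is essentially the paper's own greedy decomposition: where the paper sets $i_k=\min\{n>i_{k-1}:\mu(X_n)<\tfrac12\mu(X_{i_{k-1}})\}$ and takes $\mathscr X_k=(X_i)_{i=i_{k-1}}^{i_k-1}$, you instead take $i_s$ to be the \emph{last} index still above the threshold, which yields the same cutpoints up to an index shift. One small slip in your termination remark: inequality~\eqref{eq:L1Linfty} does not relate $\mu$ to diameter; under the standing conditions (1)--(3) the needed fact $\mu(X_i)\to 0$ follows rather from $|A|\simeq(\diam A)^d$, a point the paper's own proof simply leaves implicit.
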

\begin{proof}
	Let $\mathscr X = (X_i)_{i=1}^\infty$ be a full chain. 
	Let $i_0 = 1$ and let inductively
	\[
		i_k = \min \Big\{ n>i_{k-1} : \mu(X_n) < \frac{1}{2}  \mu(X_{i_{k-1}})\Big\}.
	\]
	The chains $\mathscr X_k = (X_i)_{i=i_{k-1}}^{i_k - 1}$ obviously are 
	fat and satisfy \eqref{eq:chains}.
\end{proof}
The above arguments work for arbitrary probability spaces $(\Omega,\mathscr F,\mu)$
with a binary filtration $(\mathscr F_n)$ and a space $S$ satisfying \eqref{eq:L1Linfty}.
From now on we again assume conditions (1)--(3) from the introduction.
	Using then Markov's inequality \eqref{eq:lip_condition} we see that the following result ist true.

\begin{lem}\label{lem:lipschitz}
	Let $B\subset A$ be two atoms with $B\in\mathscr A_{n}$.
	Then we have 
	for each function $f\in S(A)$ 
	and every $C\in \{ B', B''\}$ 
	\[
		|(\mathbb E_{n+1} - \mathbb E_{n})f| 
		\lesssim \frac{\mu(B\setminus C)\operatorname{diam} B}{\mu(B) \diam A} \|f\|_{A} \qquad \text{on } C.
	\]
\end{lem}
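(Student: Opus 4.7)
The plan is to first write the operator $(\mathbb E_{n+1}-\mathbb E_n)f$ on $C$ explicitly as a weighted difference of averages, and then apply Markov's inequality \eqref{eq:lip_condition} to control that difference in terms of $\|f\|_A/\diam A$.

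First I would set up notation. Let $D$ be the sibling of $C$ in $B$, so $\{C,D\}=\{B',B''\}$ and $B=C\sqcup D$. Since $f\in S(A)$ is (up to $\charfun_A$) a polynomial, it is continuous on $A$, and $\mathbb E_n f$ is the constant $\mu(B)^{-1}\int_B f\,d\mu$ on $B$, while $\mathbb E_{n+1} f$ is the constant $\mu(C)^{-1}\int_C f\,d\mu$ on $C$. Splitting $\int_B=\int_C+\int_D$ and collecting terms, a short calculation yields, on $C$,
\[
(\mathbb E_{n+1}-\mathbb E_n)f
=\frac{\mu(D)}{\mu(B)}\Big(\frac{1}{\mu(C)}\int_C f\,d\mu-\frac{1}{\mu(D)}\int_D f\,d\mu\Big),
\]
and $\mu(D)=\mu(B\setminus C)$, which explains the prefactor in the statement.

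The second step is to bound the bracketed difference of averages. Fixing any reference point $y\in B$, I would rewrite each average as
\[
\frac{1}{\mu(C)}\int_C f\,d\mu-f(y),\qquad \frac{1}{\mu(D)}\int_D f\,d\mu-f(y),
\]
so that the difference of averages is a difference of integrals of $f(\cdot)-f(y)$ over $C$ and $D$. For $u\in C\cup D\subset B$ we have $|u-y|\le\diam B$, and since $f\in S(A)$ with $u,y\in A$, Markov's inequality \eqref{eq:lip_condition} gives
\[
|f(u)-f(y)|\lesssim \frac{\|f\|_A}{\diam A}\,|u-y|\lesssim \frac{\|f\|_A \diam B}{\diam A}.
\]
Averaging this pointwise estimate over $C$ and $D$ and using the triangle inequality, the difference of averages is at most a constant multiple of $\|f\|_A\diam B/\diam A$.

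Combining this with the identity from the first step yields on $C$
\[
|(\mathbb E_{n+1}-\mathbb E_n)f|\lesssim \frac{\mu(B\setminus C)}{\mu(B)}\cdot\frac{\|f\|_A\diam B}{\diam A},
\]
which is exactly the claimed bound. There is no real obstacle here: the whole argument is routine once one recognizes that the prefactor $\mu(B\setminus C)/\mu(B)$ comes from the standard identity for $(\mathbb E_{n+1}-\mathbb E_n)$ on a dyadic sibling, and that the scale $\diam B/\diam A$ is delivered by the polynomial Lipschitz estimate \eqref{eq:lip_condition} applied on the ambient atom $A$ rather than on $B$.
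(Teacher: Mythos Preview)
Your proof is correct and follows exactly the approach the paper indicates: the paper simply states that the lemma is a consequence of Markov's inequality \eqref{eq:lip_condition}, and you have supplied precisely the routine details---the sibling identity for $(\mathbb E_{n+1}-\mathbb E_n)f$ yielding the factor $\mu(B\setminus C)/\mu(B)$, followed by the Lipschitz bound on $A$ giving $\diam B/\diam A$.
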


For the decomposition of a full chain $\mathscr X$ 
into a sequence of fat full chains $(\mathscr X_j)$ 
according to Lemma~\ref{lem:fat_chains},
we denote by $L_j$ and $X_j$ the atoms with largest and smallest measure in the chain $\mathscr X_j$,
respectively. 
Below, we are going to estimate the terms appearing in the previous lemma 
after a summation over the index $n$. In order to get the desired result, 
we have to assume the following geometric decay inequality for 
the diameters of the atoms appearing in the decomposition of full 
chains $\mathscr X$ into fat chains: 
	\begin{equation}\label{eq:diam_ineq}
		\diam(L_\ell) \sum_{j\leq \ell}	(\diam X_j)^{-1} \lesssim 1.
	\end{equation}

	Note that if $\mu$ is the Lebesgue measure $|\cdot|$, inequality \eqref{eq:diam_ineq} is satisfied 
	by the assumption $(\diam A)^d \simeq |A|$ and the geometric decay of $|X_j|$ or $|L_j|$
	guaranteed by the construction of fat chains.
	We also remark that \eqref{eq:diam_ineq} implies a geometric decay of the diameters of 
	sets in fat chains, and therefore also the inequality $\sum_{j\geq \ell} \diam L_j \lesssim \diam X_\ell$.

Using \eqref{eq:diam_ineq} and the inequality from Lemma~\ref{lem:lipschitz} 
on the differences $\Delta f_k$ separately, we obtain 
the following result.

\begin{lem}\label{lem:basic_estimate}
	Assume that $f\in S_n$, $B\in \mathscr A_n$ and that \eqref{eq:diam_ineq} is true.
	Let $\mathscr X=(C_i)_{i=0}^n$ be the finite full chain
	with $C_0 = \Omega$ and $C_n = B$. 

	If $p\in [1,\infty)$, the following inequality is true on the set $B=C_n$:
	\begin{align*}
		| (\mathbb E_{n+1}-\mathbb E_n)(gf)|^p  \lesssim  \\
		\sum_{j} \frac{\diam C_n}{\diam X_j} &\frac{\mu(C_n')\charfun_{C_n''} + \mu(C_n'')\charfun_{C_n'}}{\mu(C_n)} 
			 \Big( \|\Delta f_{\ell_j}\|_{L_j}^p + \sum_{k\leq n: C_k\in\mathscr X_j^*} 
		\frac{\mu(C_{k-1}')}{\mu(C_{k-1})}\|\Delta f_k\|_{C_{k-1}'}^p \Big),
	\end{align*}
	where $\ell_j$ is such that $L_j = C_{\ell_j}$.
\end{lem}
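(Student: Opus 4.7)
The plan is to telescope $f = P_0 f + \sum_{k=1}^n \Delta f_k$ using the associated smartingale $f_k := P_k f$, estimate each contribution $(\mathbb{E}_{n+1}-\mathbb{E}_n)(g\Delta f_k)$ via Lemma~\ref{lem:lipschitz} with a judiciously chosen parent atom, and finally combine using the geometric decay \eqref{eq:diam_ineq} together with H\"older's inequality in order to reach the $p$-th power. Write $\rho := (\mu(C_n')\charfun_{C_n''} + \mu(C_n'')\charfun_{C_n'})/\mu(C_n)$ for the pointwise factor occurring in the claim and note that $\rho\leq 1$, so $\rho^p\leq\rho$.

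The decisive step is the choice of parent atom in Lemma~\ref{lem:lipschitz} (which applies equally to functions in $S^2$, since its proof uses only Markov's inequality on polynomials). If $k=\ell_j$ starts the fat chain $\mathscr{X}_j$, I take $A=L_j=C_{\ell_j}$; the lemma together with $|g|\leq 1$ then yields the bound $\rho\,(\diam C_n/\diam L_j)\,\|\Delta f_{\ell_j}\|_{L_j}$ directly. For $k>\ell_j$ with $C_k\in\mathscr{X}_j^*$, the fat-chain condition $\mu(C_k)\geq \mu(L_j)/2$ forces $C_k=C_{k-1}''$ (descent to the smaller sibling would drop the measure by more than a factor of two and terminate the chain), so I take $A=C_{k-1}''$; the resulting bound $\rho\,(\diam C_n/\diam C_{k-1}'')\,\|\Delta f_k\|_{C_{k-1}''}$ is then rewritten via Lemma~\ref{lem:bounded} as $\rho\,(\diam C_n/\diam C_{k-1})\,(\mu(C_{k-1}')/\mu(C_{k-1}))\,\|\Delta f_k\|_{C_{k-1}'}$, producing exactly the penalty factor appearing in the claim (using also that sibling atoms have comparable diameter).

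Summing over $k$ and grouping according to the fat chains yields, after using \eqref{eq:diam_ineq} to replace $\diam C_{k-1}$ and $\diam L_j$ by $\diam X_j$ inside each fat chain,
\[
|(\mathbb{E}_{n+1}-\mathbb{E}_n)(gf)| \lesssim \rho \sum_j \frac{\diam C_n}{\diam X_j}\Big( \|\Delta f_{\ell_j}\|_{L_j} + \sum_{k>\ell_j,\,C_k\in\mathscr{X}_j} \frac{\mu(C_{k-1}')}{\mu(C_{k-1})}\,\|\Delta f_k\|_{C_{k-1}'}\Big).
\]
To upgrade to the $p$-th power I apply H\"older's inequality twice. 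Outer step: the weights $a_j = \diam C_n/\diam X_j$ satisfy $\sum_j a_j \lesssim 1$ by \eqref{eq:diam_ineq} (using $\diam C_n\simeq \diam X_{j_{\max}}$), so $(\sum_j a_j b_j)^p \lesssim \sum_j a_j b_j^p$. Inner step: within each fat chain the weights $1$ (for the term at $k=\ell_j$) together with $\mu(C_{k-1}')/\mu(C_{k-1})$ (for $k\in\mathscr{X}_j^*$) sum to $\lesssim 1$, since $\mu(C_{k-1}')/\mu(C_{k-1}) = 1-\mu(C_k)/\mu(C_{k-1})\leq -\log(\mu(C_k)/\mu(C_{k-1}))$ telescopes across the chain and the fat-chain bound $\mu(X_j)\geq \mu(L_j)/2$ yields a total of at most $1+\log 2$. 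A second application of H\"older with these weights converts the inner bracket to the desired sum of $p$-th powers; combined with $\rho^p\leq\rho$ this gives the claim.

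The main obstacle is the structural observation that within each $\mathscr{X}_j^*$ the descent is forced to go to the larger sibling $C_{k-1}''$; this is what allows Lemma~\ref{lem:lipschitz} to be applied with the tight parent $A=C_{k-1}''$ and Lemma~\ref{lem:bounded} to then supply the crucial penalty $\mu(C_{k-1}')/\mu(C_{k-1})$. The remaining technical points — diameter comparability within a fat chain (a consequence of \eqref{eq:diam_ineq} applied with $\ell=j$) and the telescoping bound $\sum_k \mu(C_{k-1}')/\mu(C_{k-1})=O(1)$ — are routine.
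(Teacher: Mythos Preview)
Your proof is correct and follows essentially the same route as the paper: telescope into differences $\Delta f_k$, apply Lemma~\ref{lem:lipschitz} with parent atom $A=C_k$, group into fat chains, replace $\diam C_k$ by $\diam X_j$, and run two nested Jensen/H\"older steps (outer with weights $\diam C_n/\diam X_j$, inner with weights $\mu(C_{k-1}')/\mu(C_{k-1})$). The only cosmetic differences are that you invoke Lemma~\ref{lem:bounded} \emph{before} the Jensen steps rather than between them, and you bound the inner weights via the neat telescoping estimate $1-\mu(C_k)/\mu(C_{k-1})\leq -\log(\mu(C_k)/\mu(C_{k-1}))$ instead of the paper's observation that all $\mu(C_{k-1})$ in a fat chain are comparable to $\mu(L_j)$ and the $C_{k-1}'$ are disjoint subsets of $L_j$.

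One small cleanup: your detour through $\diam C_{k-1}$ (and the accompanying remark that ``sibling atoms have comparable diameter'') is unnecessary and not available in the generality of Section~\ref{sec:lil_smart}. Since $C_k=C_{k-1}''$ and $X_j\subset C_k$, you can pass directly from $1/\diam C_{k-1}''$ to $1/\diam X_j$; no comparison between $\diam C_{k-1}''$ and $\diam C_{k-1}$ is needed.
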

\begin{proof}
	Decompose the function $f = f_n$ into its differences $f = \sum_{k\leq n} \Delta f_k$.
	We first estimate 
	$(\mathbb E_{n+1} - \mathbb E_{n})(g\Delta f_k)$ for fixed  $k\leq n$.
	On the set $C_n$ we have by  Lemma~\ref{lem:lipschitz} 
	\begin{equation}\label{eq:markov_appl}
	\begin{aligned}
		|(\mathbb E_{n+1} - \mathbb E_{n}) (g \Delta f_k)| \lesssim 
		\frac{\operatorname{diam} C_{n}}{\diam C_k\cdot \mu(C_{n})}
		\Big(  \mu(C_n')\charfun_{C_n''} + \mu(C_n'')\charfun_{C_n'}\Big)\|\Delta f_k\|_{C_k}.
	\end{aligned}
	\end{equation}
	Recall that we split the chain $\mathscr X$ into the fat full chains 
	$(\mathscr X_j)$  according to Lemma~\ref{lem:fat_chains}.
	For each $j$ denote by $L_j$ and $X_j$  the largest and smallest atom in 
	the chain $\mathscr X_j$, respectively.
	Then, on $C_n$, we have by \eqref{eq:markov_appl}
	\begin{align}
		\notag
		\Big(\sum_{k\leq n} |&(\mathbb E_{n+1} - \mathbb E_{n}) (g \Delta f_k)| \Big)^p
		= \Big(\sum_{j} \sum_{k\leq n : C_k\in \mathscr X_j} 
		|(\mathbb E_{n+1} - \mathbb E_{n})(g\Delta f_k)|\Big)^p \\
		\notag
		&\lesssim 
			\Big(\sum_{j}\frac{\operatorname{diam} C_{n}}{\diam X_j}
			\frac{ \mu(C_n') \charfun_{C_n''} + \mu(C_n'')
			 \charfun_{C_n'}}{\mu(C_{n})} \sum_{k\leq n : C_k\in\mathscr X_j} \|\Delta f_k\|_{C_k}\Big)^p \\
		&\lesssim  \sum_{j} \frac{\operatorname{diam} C_{n}}{\diam X_j}
		\frac{ \mu(C_n') \charfun_{C_n''} + \mu(C_n'') \charfun_{C_n'}}{\mu(C_{n})}
		\Big(\sum_{k\leq n : C_k\in\mathscr X_j} \|\Delta f_k\|_{C_k} \Big)^p,
		\label{eq:longsum}
	\end{align}
	where the latter inequality follows from \eqref{eq:diam_ineq} and Jensen's inequality.
	For fixed index $j$ 
	we split the non-empty sum over one chain
	into the parts	
	\begin{align*}
		\Big( \sum_{k\leq n:C_k\in \mathscr X_j} \|\Delta f_k\|_{C_k} \Big)^p \lesssim 
		  \|\Delta f_{\ell_j}\|_{L_j}^p + \Big( \sum_{k\leq n:C_k\in \mathscr X_j^*} \|\Delta f_k\|_{C_k} \Big)^p,
	\end{align*}
	where $\ell_j$ is such that $L_j = C_{\ell_j}$.
	Since $C_k = C_{k-1}''$ for $C_k\in \mathscr X_j^*$ we have by Lemma~\ref{lem:bounded}
	\begin{align*}
		\Big( \sum_{k\leq n:C_k\in \mathscr X_j} \|\Delta f_k\|_{C_k} \Big)^p 
		&\lesssim \|\Delta f_{\ell_j}\|_{L_j}^p + \Big( \sum_{k\leq n:C_k\in \mathscr X_j^*} 
			\frac{\mu(C_{k-1}')}{\mu(C_{k-1})} \|\Delta f_k\|_{C_{k-1}'}\Big)^p \\
		&\lesssim \|\Delta f_{\ell_j}\|_{L_j}^p +  \sum_{k\leq n:C_k\in \mathscr X_j^*} 
			\frac{\mu(C_{k-1}')}{\mu(C_{k-1})} \|\Delta f_k\|_{C_{k-1}'}^p,
	\end{align*}
	where the latter inequality is a consequence of Jensen's inequality 
	and the fact that the measures $\mu(C_{k-1})$ with $C_k\in \mathscr X_j^*$ 
	are comparable to $\mu(L_j)$. Inserting this inequality in \eqref{eq:longsum} yields the 
	conclusion.
\end{proof}

We use this inequality to derive the following pointwise result relating the martingale and smartingale square 
functions $S_M$ and $S$, respectively.
\begin{thm}\label{thm:pw}
	Suppose that $(f_n)$ is a smartingale and  \eqref{eq:diam_ineq} is true.
	
	Then, we have the following pointwise inequality between the 
	martingale square function and the smartingale square function:
	\begin{equation}\label{eq:square_comp}
		S_M^2 = \sum_n \mathbb E_{n-1}  |(\mathbb E_n - \mathbb E_{n-1})(g f_n)|^2 \lesssim 
		 \sum_n (\mathbb E_n + \mathbb E_{n-1}) |\Delta f_n|^2 =  S^2
	\end{equation}
\end{thm}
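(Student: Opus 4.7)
My plan is to fix an arbitrary point $\omega\in\Omega$, let $(C_i)_{i=0}^\infty$ denote the infinite full chain passing through $\omega$, and prove the pointwise inequality at $\omega$. The starting observation is that $g\charfun_A\in S_{n-1}$ for every $A\in\mathscr A_{n-1}$, so the smartingale identity $P_{n-1}f_n=f_{n-1}$ gives $\mathbb E_{n-1}(g\Delta f_n)=0$; consequently
\[
(\mathbb E_n-\mathbb E_{n-1})(gf_n)=(\mathbb E_n-\mathbb E_{n-1})(gf_{n-1})+\mathbb E_n(g\Delta f_n).
\]
The contribution of the second summand is handled directly: Jensen's inequality and $0\le g\le 1$ yield $\mathbb E_{n-1}|\mathbb E_n(g\Delta f_n)|^2\le\mathbb E_{n-1}(\Delta f_n)^2$, which after summation in $n$ is already absorbed into $S^2$.

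For the first summand I apply Lemma~\ref{lem:basic_estimate} with $p=2$, level $n-1$, $f=f_{n-1}\in S_{n-1}$, and $B=C_{n-1}$, and then take $\mathbb E_{n-1}$. The characteristic factor $\bigl(\mu(C_{n-1}')\charfun_{C_{n-1}''}+\mu(C_{n-1}'')\charfun_{C_{n-1}'}\bigr)/\mu(C_{n-1})$ integrates to $2\mu(C_{n-1}')\mu(C_{n-1}'')/\mu(C_{n-1})^2\le 2\mu(C_{n-1}')/\mu(C_{n-1})$, giving the pointwise bound
\[
\mathbb E_{n-1}|(\mathbb E_n-\mathbb E_{n-1})(gf_{n-1})|^2(\omega)\lesssim\frac{\mu(C_{n-1}')}{\mu(C_{n-1})}\sum_{j}\frac{\diam C_{n-1}}{\diam X_j}\Bigl(\|\Delta f_{\ell_j}\|_{L_j}^2+\sum_{\substack{k\le n-1\\ C_k\in\mathscr X_j^*}}\frac{\mu(C_{k-1}')}{\mu(C_{k-1})}\|\Delta f_k\|_{C_{k-1}'}^2\Bigr).
\]
Summing this in $n$ and swapping the order of summation, each term $\|\Delta f_k\|_{C_{k-1}'}^2$ (and, analogously, each top-of-chain term $\|\Delta f_{\ell_j}\|_{L_j}^2$) picks up a cumulative weight of the shape $\frac{\mu(C_{k-1}')}{\mu(C_{k-1})}\sum_{n\ge k}\frac{\mu(C_{n-1}')}{\mu(C_{n-1})}\cdot\frac{\diam C_{n-1}}{\diam X_{j(n,k)}}$, where $j(n,k)$ denotes the fat chain at level $n-1$ that contains $C_k$.

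The main obstacle is to show that the $n$-sum is bounded by an absolute constant uniformly in $k$. This is where \eqref{eq:diam_ineq} enters essentially: as noted after that inequality, it implies the geometric decay $\sum_{j\ge\ell}\diam L_j\lesssim\diam X_\ell$, so within any one fat chain the atoms have comparable diameters and across distinct fat chains the diameters decay geometrically. Using $\mu(C_{n-1}')/\mu(C_{n-1})\le 1$, the weight collapses to $\diam L_{j(k)}\sum_{m\le j(k)}(\diam X_m)^{-1}\lesssim 1$, where $j(k)$ is the fat chain containing $C_k$. Finally, \eqref{eq:L1Linfty} (applied to $\Delta f_k\in S$ on $C_{k-1}'$) gives $\|\Delta f_k\|_{C_{k-1}'}^2\lesssim\mu(C_{k-1}')^{-1}\int_{C_{k-1}'}(\Delta f_k)^2\dif\mu$, and combining with the surviving factor $\mu(C_{k-1}')/\mu(C_{k-1})$ yields
\[
\|\Delta f_k\|_{C_{k-1}'}^2\cdot\frac{\mu(C_{k-1}')}{\mu(C_{k-1})}\lesssim\frac{1}{\mu(C_{k-1})}\int_{C_{k-1}'}(\Delta f_k)^2\dif\mu\le\mathbb E_{k-1}(\Delta f_k)^2.
\]
The top-of-chain terms $\|\Delta f_{\ell_j}\|_{L_j}^2$ are treated in the same way and contribute to $(\mathbb E_{\ell_j-1}+\mathbb E_{\ell_j})(\Delta f_{\ell_j})^2$. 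Summing over $k$ produces the right-hand side of \eqref{eq:square_comp}, completing the argument.
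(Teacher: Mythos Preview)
Your approach matches the paper's: split off the $k=n$ summand, apply Lemma~\ref{lem:basic_estimate} with $p=2$ at level $n-1$, take $\mathbb E_{n-1}$, sum in $n$ and interchange; the final conversion of $\|\Delta f_k\|_{C_{k-1}'}^2\cdot \mu(C_{k-1}')/\mu(C_{k-1})$ into $\mathbb E_{k-1}(\Delta f_k)^2$ via \eqref{eq:L1Linfty} is also correct.

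The gap is in your bound for the $n$-sum. After the swap, the weight attached to a fixed $k$ with $C_k\in\mathscr X_{j}$ (where $j=j(k)$ refers to the infinite fat-chain decomposition) is
\[
\sum_{n>k}\frac{\mu(C_{n-1}')}{\mu(C_{n-1})}\cdot\frac{\diam C_{n-1}}{\diam X_{j}}.
\]
You discard the factor $\mu(C_{n-1}')/\mu(C_{n-1})$ via the trivial bound $\le 1$ and claim the result collapses to $\diam L_{j(k)}\sum_{m\le j(k)}(\diam X_m)^{-1}$. This is wrong on two counts. First, as $n$ grows the fat-chain index of $C_{n-1}$ ranges over $s\ge j$, not $s\le j$, so the relevant consequence of \eqref{eq:diam_ineq} is the forward decay $\sum_{s\ge j}\diam L_s\lesssim\diam X_j$, not the inequality as written. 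Second, and more importantly, a single fat chain $\mathscr X_s$ can contain arbitrarily many atoms (no assumption on comparable siblings is made here), so the trivial bound produces an unbounded contribution already from one chain. The missing ingredient is that for $C_m,C_{m+1}\in\mathscr X_s$ one has $C_{m+1}=C_m''$ (since $\mu(C_{m+1})\ge\mu(L_s)/2\ge\mu(C_m)/2$ forces $C_{m+1}$ to be the larger child), whence $\mu(C_m')=\mu(C_m)-\mu(C_{m+1})$ telescopes and, using $\mu(C_m)\ge\mu(L_s)/2$,
\[
\sum_{m:\,C_m\in\mathscr X_s}\frac{\mu(C_m')}{\mu(C_m)}\le\frac{2}{\mu(L_s)}\bigl(\mu(L_s)-\mu(X_s)\bigr)+\frac12\lesssim 1.
\]
With this bound per fat chain and $\diam C_{n-1}\le\diam L_s$ for $C_{n-1}\in\mathscr X_s$, the $n$-sum is controlled by $(\diam X_j)^{-1}\sum_{s\ge j}\diam L_s\lesssim 1$, and the rest of your argument goes through unchanged.
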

\begin{proof}
	Let $x\in\Omega$ be arbitrary and let $\mathscr X = (C_j)_{j=0}^\infty$ be the full 
	chain of atoms containing the point $x$.
	We begin by substituting $f_n = \sum_{k\leq n} \Delta f_k$. By Jensen's inequality it 
	suffices to estimate the expression 
	\begin{align*}
		\sum_n 	\mathbb E_{n-1}\Big| (\mathbb E_n &- \mathbb E_{n-1})\big(g\sum_{k< n}\Delta f_k\big)\Big|^2 
		\leq \sum_n 	\mathbb E_{n-1}\Big(\sum_{k< n} |(\mathbb E_n - \mathbb E_{n-1})(g \cdot \Delta f_k)|\Big)^2.
	\end{align*}
Using Lemma~\ref{lem:basic_estimate} with $p=2$ and taking the conditional expectation $\mathbb E_{n-1}$ 
yields on the atom $C_{n-1}$
	\begin{equation}\label{eq:cond_n-1}
		\begin{aligned}
		\mathbb E_{n-1}\Big(\sum_{k<n} |&(\mathbb E_n - \mathbb E_{n-1}) (g \Delta f_k)| \Big)^2 \\
		&\lesssim \sum_{j\leq j_n} \frac{\mu(C_{n-1}')\operatorname{diam}C_{n-1} }{\mu(C_{n-1}) \diam X_j}
		\Big(\|\Delta f_{\ell_j}\|_{L_j}^2 + \sum_{k<n : C_k\in\mathscr X_j^*}\frac{\mu(C_{k-1}')}{\mu(C_{k-1})} \|\Delta f_k\|_{C_{k-1}'} \Big),
		\end{aligned}
	\end{equation}
	with $j_n$ such that $C_n\in\mathscr X_{j_n}$.
	Summing over $n$ and rearranging the sums gives at the point $x$ the inequality
	\begin{align*}
		\sum_n \mathbb E_{n-1}\Big(\sum_{k<n} |(\mathbb E_n - \mathbb E_{n-1})&(g \Delta f_k)| \Big)^2 
		\lesssim  
		 \sum_{j=1}^\infty \|\Delta f_{\ell_j}\|_{L_j}^2 \sum_{n>\ell_j}
		 \frac{\mu(C_{n-1}')\operatorname{diam}C_{n-1}}{\mu(C_{n-1}) \diam X_j}  \\
		&+\sum_{j=1}^\infty \sum_{k: C_k\in \mathscr X_j^*} \frac{\mu(C_{k-1}')}{\mu(C_{k-1})}\|\Delta f_k\|_{C_{k-1}'}^2 
		\sum_{n>k} \frac{\mu(C_{n-1}') \operatorname{diam}C_{n-1}}{\mu(C_{n-1}) \diam X_j }.
	\end{align*}
	Inequality~\eqref{eq:diam_ineq} implies that, after decomposition into 
	fat chains $(\mathscr X_j)$,
	 the inner sums over $n$ are both $\lesssim 1$. This gives, by inequality \eqref{eq:L1Linfty},
	\begin{align*}
		\sum_n\mathbb E_{n-1}\Big(\sum_{k<n} |(\mathbb E_n - \mathbb E_{n-1})(g \Delta f_k)&| \Big)^2 
		\lesssim  \sum_{j=1}^\infty \Big( \|\Delta f_{\ell_j}\|_{L_j}^2 + 
			\sum_{k:C_k\in\mathscr X_j^*}\frac{\mu(C_{k-1}')}{\mu(C_{k-1})}\|\Delta f_k\|_{C_{k-1}'}^2\Big) \\
		&\lesssim \sum_{j=1}^\infty \Big(\mathbb E_{\ell_j} (|\Delta f_{\ell_j}|^2)(x) + 
			\sum_{k:C_k\in\mathscr X_j^*} \mathbb E_{k-1}(|\Delta f_k|^2)(x) \Big) \\
		&\lesssim \sum_n (\mathbb E_{n}+\mathbb E_{n-1}) (|\Delta f_n|^2)(x),
	\end{align*}
	completing the proof.
\end{proof}

For the smartingale $(f_n)$, recall that the associated martingale $M_n$ 
was defined by $M_n = \mathbb E_n(g f_n)$ with the function $g\in S$ satisfying $0<c_3\leq g\leq 1$.
Define for $a>0$
\[
\tau_{S,a} := \inf\{ n : f_n \geq a\},\qquad \tau_{M,a} := \inf\{ n : M_n \geq a\}.
\] 
Note that $\tau_{M,a}$ is a stopping time, but $\tau_{S,a}$ isn't.
\begin{prop}\label{prop:stop_comp}
Suppose that  $(f_n)$ is a smartingale with uniformly bounded 
differences $|\Delta f_n|\leq L$ for some constant $L$ and assume that \eqref{eq:diam_ineq} is true.

 Then, with $\beta = c_3/2$, there exists a constant $K$ so that 
 we have for all integers $n$ and 
 all positive numbers $a\geq K L$  
	\[
		\{ \tau_{S,a} = n\} \subseteq \{\tau_{M,\beta a}	\leq n\}.
	\]
\end{prop}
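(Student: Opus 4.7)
The plan is to show the stronger pointwise inequality $M_n(\omega) \geq \beta a$ for every $\omega$ with $\tau_{S,a}(\omega) = n$, so that the asserted inclusion is witnessed by $m = n$. The central observation is that although $\|f_n\|_{A_n}$ (where $A_n$ is the atom of $\mathscr F_n$ containing $\omega$) can in principle be much larger than $a$, the \emph{oscillation} of $f_n$ on $A_n$ is only of order $L$, by a telescoping Markov estimate on the differences $\Delta f_k$.

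Starting from
\[
M_n(\omega) - g(\omega) f_n(\omega) = \frac{1}{\mu(A_n)}\int_{A_n}\Bigl[\, g(x)\bigl(f_n(x) - f_n(\omega)\bigr) + f_n(\omega)\bigl(g(x) - g(\omega)\bigr)\,\Bigr]\rd\mu(x),
\]
I would bound the two summands separately. For the first, I decompose $f_n = \sum_{k=1}^n \Delta f_k$ and apply Markov's inequality \eqref{eq:lip_condition} to each polynomial $\Delta f_k$ on the atom $A_k \in \mathscr A_k$ containing $\omega$: since $\|\Delta f_k\|_{A_k} \leq L$, the resulting slope bound yields $|\Delta f_k(x) - \Delta f_k(\omega)| \lesssim L\,\diam A_n/\diam A_k$ for $x \in A_n \subseteq A_k$. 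Summing over $k \leq n$ and using the geometric decay $\diam A_n/\diam A_k \lesssim \rho^{n-k}$ for some $\rho < 1$ (a consequence of the comparable-siblings assumption combined with \eqref{eq:comp_w_diam}), the resulting series converges and gives $\sup_{x\in A_n}|f_n(x) - f_n(\omega)| \leq C_1 L$.

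For the second summand, Markov's inequality applied to the fixed polynomial $g \in S$ on $\Omega$ gives $|g(x) - g(\omega)| \lesssim \diam A_n/\diam\Omega$. The crucial extra input is that $f_n(\omega) \geq a$ combined with the pointwise bound $|f_n(\omega)| \leq nL$ (from $f_0 = 0$ and bounded differences) forces $n \geq a/L \geq K$, and hence $\diam A_n \lesssim \rho^{K}\diam\Omega$, exponentially small in $K$. Combined with $|f_n(\omega)| \leq a + L$ (from first passage), this second piece contributes at most $C_2\, a\, \rho^K$.

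Putting the two pieces together, $M_n(\omega) \geq c_3 a - C_1 L - C_2\, a\, \rho^K$. Choosing $K$ large enough so that $C_1 L \leq C_1 a/K \leq c_3 a/4$ and $C_2 \rho^K \leq c_3/4$, both error terms are at most $c_3 a/4$, yielding $M_n(\omega) \geq c_3 a/2 = \beta a$ as required. The main obstacle I anticipate is the bookkeeping of the telescoping Markov estimate — carefully verifying that the sum of oscillation contributions from the $\Delta f_k$'s on $A_n$ yields only $O(L)$ rather than the naive $O(\|f_n\|_{A_n})$ — but this reduces to a routine geometric-series computation once the geometric decay of diameters guaranteed by comparable siblings and \eqref{eq:comp_w_diam} is invoked.
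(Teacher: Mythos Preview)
Your overall strategy --- bound the oscillation of $f_n$ on the atom $C_n\in\mathscr A_n$ containing $\omega$ by a constant times $L$, then conclude $M_n(\omega)\geq c_3 a/2$ --- is exactly the paper's. The gap is in how you control the oscillation sum.

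You invoke the \emph{comparable-siblings} assumption to get the per-level geometric decay $\diam C_n/\diam C_k \lesssim \rho^{n-k}$, but that assumption is \emph{not} a hypothesis of this proposition; only inequality~\eqref{eq:diam_ineq} is. Under~\eqref{eq:diam_ineq} alone your decay claim can fail: the chain $(C_k)$ may spend arbitrarily many consecutive steps going to the big sibling $C_k=C_{k-1}''$ (this is precisely a single \emph{fat} chain), and along such a stretch the diameters $\diam C_k$ can stay essentially constant, so that $\sum_{k\leq n}\diam C_n/\diam C_k$ blows up. Consequently the bound $\sup_{x\in C_n}|f_n(x)-f_n(\omega)|\leq C_1 L$ is not justified by your argument, and the same defect recurs in your treatment of the $g$-oscillation term (which also relies on $\diam C_n\lesssim\rho^n$).

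The paper repairs exactly this point. After applying Markov's inequality \eqref{eq:lip_condition} to each $\Delta f_k$, it decomposes the full chain into fat chains $(\mathscr X_j)$ via Lemma~\ref{lem:fat_chains}. For the \emph{first} atom $L_j$ of each fat chain one uses the crude bound $\|\Delta f_{\ell_j}\|_{L_j}\leq L$, but for the remaining atoms $C_k\in\mathscr X_j^*$ (where necessarily $C_k=C_{k-1}''$) one invokes Lemma~\ref{lem:bounded} to gain the extra small factor
\[
\|\Delta f_k\|_{C_k}=\|\Delta f_k\|_{C_{k-1}''}\lesssim \frac{\mu(C_{k-1}')}{\mu(C_{k-1})}\,\|\Delta f_k\|_{C_{k-1}'}\leq L\,\frac{\mu(C_{k-1}')}{\mu(C_{k-1})}.
\]
Since the sets $C_{k-1}'$ are disjoint subsets of $L_j$, the factors $\mu(C_{k-1}')/\mu(C_{k-1})$ sum to $\lesssim 1$ within each fat chain, and then~\eqref{eq:diam_ineq} handles the sum over~$j$. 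This yields the Lipschitz bound $\sup_{y,z\in C_n}|f_n(y)-f_n(z)|\lesssim L$, whence $f_n\geq a/2$ on all of $C_n$ once $a\gtrsim L$. From there the conclusion is one line: $M_n(\omega)=\mu(C_n)^{-1}\int_{C_n}gf_n\,\rd\mu\geq c_3\cdot a/2$, with no need to separately estimate the oscillation of $g$.
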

As a corollary, we have $\{ \tau_{S,a} < \infty \} \subseteq \{\tau_{M,\beta a} < \infty\}$ if $a>0$ 
is sufficiently large.
\begin{proof}
	Let $x\in \{ \tau_{S,a} =n\}$, which gives that $f_n(x)\geq a$.
	Let $\mathscr X = (C_j)_{j=0}^n$ be the full chain of atoms such that 
	$x\in C_j\in\mathscr A_j$ for each $j$.
	Then we estimate 
	for $y,z\in C_n$ with $y\neq z$
	\[
		\frac{|f_n(y)-  f_n(z)|}{|y-z|}  \leq |y-z|^{-1} \sum_{k\leq n} |\Delta f_k(y) - \Delta f_k(z)|
	\]
	Note that the function $\Delta f_k$ is contained in $S$ on the set  $C_k$. Therefore we use 
	Markov's inequality  \eqref{eq:lip_condition} to obtain
	\[
			\frac{|f_n(y)-  f_n(z)|}{|y-z|}	\leq C \sum_{k\leq n} \frac{\| \Delta f_k\|_{C_k}}{\diam C_k}.
	\]
	Decompose the full chain $\mathscr X$ into 
	fat chains $\mathscr X_1,\ldots, \mathscr X_\ell$, and
	denote by $L_j$ the largest and $X_j$ the smallest atom in $\mathscr X_j$, respectively.
	Then, using this decomposition, we write 
	\begin{align*}
		\sum_{k\leq n} \frac{\|\Delta f_k\|_{C_k}}{ \diam C_k} &=	
		\sum_{j=1}^\ell \sum_{k : C_k \in \mathscr X_j} \frac{\|\Delta f_k\|_{C_k}}{\diam C_k} 
		= \sum_{j=1}^\ell \Big( \frac{\|\Delta f_{\ell_j} \|_{L_j}}{\diam L_j} +
		 \sum_{k: C_k\in \mathscr X_j^*} \frac{\|\Delta f_k\|_{C_k}}{\diam C_k}\Big),
	\end{align*}
	where $\ell_j$ is such that $C_{\ell_j} = L_j$.
	For the first part, we just use the boundedness of differences $\| \Delta f_k\|_{\infty} \leq L$
	and for the second part, we invoke Lemma~\ref{lem:bounded} to get 
	\begin{align*}
		\sum_{k\leq n} \frac{\|\Delta f_k\|_{C_k}} {\diam C_k} &\lesssim
		L \sum_{j=1}^\ell \Big( \frac{1}{\diam L_j} +
		 \sum_{k: C_k\in \mathscr X_j^*} \frac{\mu(C_{k-1}') }{\mu(C_{k-1}) \diam C_k}\Big).
	\end{align*}
	Observe that for each $k$ such that $C_k \in \mathscr X_j^*$ we have 
	$C_k = C_{k-1}''$ and therefore $\sum_{k: C_k\in\mathscr X_j^*} \mu(C_{k-1}')
	 \leq \mu(L_j) \lesssim \mu(C_i)$
	for each $i$ with $C_i\in\mathscr X_j^*$.
	By assumption~\eqref{eq:diam_ineq},
	\begin{equation}\label{eq:geom_blocks}
		\sum_{k\leq n} \frac{\|\Delta f_k\|_{C_k}}{\diam C_k} \lesssim
		L\sum_{j=1}^\ell (\diam X_j)^{-1} \lesssim L  (\operatorname{diam} L_\ell)^{-1} \leq L(\operatorname{diam} C_n)^{-1}.
	\end{equation}
	Combining this with the above calculations 
	\begin{equation}\label{eq:lip}
		\sup_{y,z\in C_n, y\neq z} \frac{|f_n(y) - f_n(z)|}{|y-z|} \lesssim L (\operatorname{diam} C_n)^{-1}.
	\end{equation}
	If $D$ is the implicit constant in \eqref{eq:lip} on the right hand side
	and $a\geq 2DL$ we obtain for all $y\in C_n$
	\[
		f_n(y) \geq f_n(x) - |f_n(y)-f_n(x)| \geq a - DL \frac{|x-y|}{\diam C_n} \geq \frac{a}{2},
	\]
	by the assumption $\tau_{S,a}(x)=n$.
	Now, recall that $g\in S$ is such that $g\geq c_3$ and $\|g\|_{\Omega} = 1$.
	Therefore we conclude
	\begin{align*}
		\mathbb E_n(g f_n)(x) = \frac{1}{\mu(C_n)}\int_{C_n} g f_n \dif\mu
		\geq \frac{c_3 a }{2}
	\end{align*}
	if $a \geq 2DL$. But this immediately gives $x\in \{\tau_{M,\beta a} \leq n\}$ 
	for $a\geq 2DL$ and with $\beta = c_3/2$.
\end{proof}

Recall the definition $S_n^2 = \sum_{k\leq n} (\mathbb E_k + \mathbb E_{k-1})(\Delta f_k)^2$
of the smartingale square function $S_n$ and $S = \sup_n S_n$.

\begin{prop}\label{prop:exp_ineq_smart}
	Let $(f_n)$ be a smartingale with uniformly bounded differences $|\Delta f_n|\leq L$ 
	and assume inequality \eqref{eq:diam_ineq}.

	Then, there exists $r>0$ such that for $a>0$ sufficiently large and all $b>0$ we have 
	\[
		\mu (S^2_{\tau_{S,a}} \leq b,\tau_{S,a} < \infty)	 \leq  \exp\Big(- \frac{a^2}{r (La+b)} \Big),
	\]
	with $\tau_{S,a} = \inf\{ n : f_n \geq a\}$. 
\end{prop}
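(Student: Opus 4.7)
The plan is to deduce the exponential inequality from Freedman's inequality (Lemma~\ref{lem:mart_exp}) applied to the associated martingale $M_n = \mathbb E_n(gf_n)$, transporting the information back to the smartingale via Proposition~\ref{prop:stop_comp} and Theorem~\ref{thm:pw}. The three ingredients needed are: an a.s.\ bound on $|\Delta M_n|$ in terms of $L$, the pointwise comparison $S_{M,n}^2 \leq K S_n^2$ (available in partial-sum form from the proof of Theorem~\ref{thm:pw}), and the stopping-time containment $\tau_{M,\beta a} \leq \tau_{S,a}$ on $\{\tau_{S,a}<\infty\}$ for sufficiently large $a$.

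For the first ingredient, I would decompose
\[
\Delta M_n = (\mathbb E_n - \mathbb E_{n-1})(gf_{n-1}) + \mathbb E_n(g\Delta f_n),
\]
using identity \eqref{eq:simple} to conclude that $\mathbb E_{n-1}(g\Delta f_n)=0$. The second summand is trivially bounded by $L$. For the first, apply Lemma~\ref{lem:basic_estimate} with $p=1$ to the function $f_{n-1} \in S_{n-1}$, with the lemma's index shifted to $n-1$. Since $\|\Delta f_k\|_\infty \leq L$, the inner sum $\sum_{k:C_k\in\mathscr X_j^*}(\mu(C_{k-1}')/\mu(C_{k-1}))\|\Delta f_k\|_{C_{k-1}'}$ is controlled by the telescoping estimate $\sum_k \mu(C_{k-1}') \leq \mu(L_j)$ (the same trick as in \eqref{eq:geom_blocks}), giving $\lesssim L$ per fat chain. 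The outer sum over fat chains is then bounded by \eqref{eq:diam_ineq}, giving $|(\mathbb E_n-\mathbb E_{n-1})(gf_{n-1})| \lesssim L$ and hence $|\Delta M_n| \leq L'$ with $L' = CL$.

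For the remaining ingredients, the partial-sum form $S_{M,n}^2 \leq K S_n^2$ follows from the proof of Theorem~\ref{thm:pw}, since all estimates there are done termwise and the sum-swapping introduces only terms with index $\leq n$. Combined with the monotonicity of $S_{M,\cdot}^2$ and $\tau_{M,\beta a}\leq\tau_{S,a}$ from Proposition~\ref{prop:stop_comp}, this yields the event inclusion
\[
\{S^2_{\tau_{S,a}}\leq b,\ \tau_{S,a}<\infty\} \subseteq \{S^2_{M,\tau_{M,\beta a}}\leq Kb,\ \tau_{M,\beta a}<\infty\}.
\]
Freedman's inequality applied to $(M_n)$ with difference bound $L'$, at level $\beta a$ and threshold $Kb$, then gives an upper bound $\exp(-(\beta a)^2/(2(L'\beta a + Kb)))$; absorbing $\beta,L',K$ into a single constant $r>0$ yields the asserted $\exp(-a^2/(r(La+b)))$. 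The main obstacle is the uniform bound $|\Delta M_n|\lesssim L$, since the naive triangle-inequality estimate of the oscillation of $gf_{n-1}$ on $C_{n-1}$ depends on the (unbounded) size of $\|f_{n-1}\|_{C_{n-1}}$; one must exploit the telescoping-over-fat-chains structure provided by Lemma~\ref{lem:basic_estimate} together with \eqref{eq:diam_ineq}.
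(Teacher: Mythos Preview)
Your proposal is correct and follows essentially the same route as the paper's proof: reduce to Freedman's inequality for the associated martingale $M_n=\mathbb E_n(gf_n)$ via the stopping-time containment from Proposition~\ref{prop:stop_comp} and the square-function comparison from Theorem~\ref{thm:pw}. Your treatment is in fact slightly more explicit than the paper's at two points: the paper just says ``by Lemma~\ref{lem:basic_estimate} with $p=1$'' for the bound $|\Delta M_n|\lesssim L$, whereas you spell out the decomposition $\Delta M_n=(\mathbb E_n-\mathbb E_{n-1})(gf_{n-1})+\mathbb E_n(g\Delta f_n)$ and the telescoping over fat chains; and the paper silently uses the partial-sum form $S_{M,n}^2\lesssim S_n^2$ of Theorem~\ref{thm:pw}, which you correctly note follows from its proof.
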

\begin{proof}
	Recall that the martingale $M_n$ associated to the smartingale $(f_n)$ is given by 
	 $M_n = \mathbb E_n(g f_n)$. Since $(f_n)$ has uniformly bounded differences $|\Delta f_n|\leq L$, 
	 we have by Lemma~\ref{lem:basic_estimate} with $p=1$ that the
	 martingale $(M_n)$ has uniformly bounded differences $|\Delta M_n|\leq KL$ for some 
	 constant $K$ as well. Moreover, recall
	that  $\tau_{M,a} = \inf\{ n : M_n \geq a\}$ and $\tau_{S,a} = \inf\{n : f_n \geq a\}$.
	Let $c$ be the implicit constant from Theorem~\ref{thm:pw} in inequality \eqref{eq:square_comp}.
	Let $\beta = c_3 /2$ and 
	take $\omega\in \{ S_{\tau_{S,a}}^2 \leq b, \tau_{S,a} < \infty \}$ and set $n=\tau_{S,a}(\omega)$ and 
	$m = \tau_{M,\beta a}(\omega)$. If $a>0$ is sufficiently large,
	 we have by Proposition~\ref{prop:stop_comp} that  $m\leq n$.
	Since $\omega\in \{ S_{\tau_{S,a}}^2 \leq b\}$ we have $S_n^2(\omega) \leq b$. 
	By Theorem~\ref{thm:pw}  we have $S_{M,n}^2(\omega) \leq cb$ and 
	a fortiori $S_{M,m}^2(\omega) \leq S_{M,n}^2(\omega) \leq cb$ and this gives 
	$\omega\in \{ S^2_{M,\tau_{M,\beta a}} \leq cb\}$.
	Summarizing we have 
	\[
		\{ S^2_{\tau_{S,a}} \leq b\} \subseteq \{ S_{M,\tau_{M,\beta a}}^2 \leq cb\},\qquad \{\tau_{S,a} < \infty\} \subseteq \{\tau_{M,\beta a} < \infty\}
	\]
	if $a>0$ is sufficiently large. Using Lemma~\ref{lem:mart_exp} we conclude 
	\[
		\mu(S^2_{\tau_{S,a}} \leq b, \tau_{S,a} <\infty)	\leq \mu(S_{M,\tau_{M,\beta a}}^2 \leq cb, \tau_{M,\beta a} < \infty)\leq \exp\Big(- \frac{\beta ^2 a^2}{2( \beta K L a + cb)} \Big),
	\]
	proving the desired result for some constant  $ r$.
\end{proof}

An immediate corollary of Proposition~\ref{prop:exp_ineq_smart} and 
Lemma~\ref{lem:exp_implies_lil} is the following inequality:
\begin{thm}\label{thm:lil_smartingale}
	Let $(f_n)$ be a smartingale with bounded differences $|\Delta f_n|\leq L$ such that 
	inequality \eqref{eq:diam_ineq} holds.

	Then we have, for some constant $r>0$,
	\[
		\limsup_{n\to\infty} \frac{f_n}{\sqrt{rS_n^2\log\log S_n^2}}\leq 1,\qquad \mu\text{-a.s. on } \{S^2 = \infty \}.
	\]
\end{thm}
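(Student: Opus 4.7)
The plan is to combine the two pieces we have already built: the exponential tail bound of Proposition~\ref{prop:exp_ineq_smart} and the abstract implication in Lemma~\ref{lem:exp_implies_lil}. Everything substantive has been done; what remains is to line up the hypotheses.

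First I would set $R_n := f_n$ and $Q_n := S_n^2$. The sequence $(Q_n)$ is manifestly non-decreasing, since
\[
S_n^2 - S_{n-1}^2 = (\mathbb E_n + \mathbb E_{n-1})(\Delta f_n)^2 \geq 0,
\]
and each $Q_n$ is $\mathscr F_n$-measurable because $S_n^2$ only involves conditional expectations with respect to $\mathscr F_k$ for $k\leq n$. Thus the adaptedness and monotonicity hypotheses of Lemma~\ref{lem:exp_implies_lil} are met. The stopping time $\tau_{S,a} = \inf\{n : f_n \geq a\}$ coincides with the $\tau_a$ appearing in the lemma.

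Next I would invoke Proposition~\ref{prop:exp_ineq_smart}: under the standing assumption \eqref{eq:diam_ineq} and the uniform bound $|\Delta f_n|\leq L$, there exists $r>0$ such that for all $a$ sufficiently large and every $b>0$,
\[
\mu\bigl(S^2_{\tau_{S,a}} \leq b,\, \tau_{S,a} < \infty\bigr) \leq \exp\!\Big(-\frac{a^2}{r(La+b)}\Big).
\]
This is exactly the hypothesis \eqref{eq:axiom_exp} of Lemma~\ref{lem:exp_implies_lil} with $R_n = f_n$ and $Q_n = S_n^2$.

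Finally, applying Lemma~\ref{lem:exp_implies_lil} directly produces
\[
\limsup_{n\to\infty} \frac{f_n}{\sqrt{rS_n^2 \log\log S_n^2}} \leq 1 \qquad \mu\text{-a.s. on } \{Q_\infty = \infty\},
\]
and since $Q_\infty = \sup_n S_n^2 = S^2$, the set $\{Q_\infty = \infty\}$ is precisely $\{S^2 = \infty\}$, yielding the claim. There is no real obstacle here: all the analytic work (the pointwise square-function comparison of Theorem~\ref{thm:pw}, the stopping-time comparison of Proposition~\ref{prop:stop_comp}, Freedman's inequality packaged as Lemma~\ref{lem:mart_exp}, and the Borel–Cantelli style argument in Lemma~\ref{lem:exp_implies_lil}) is already in place, and the theorem is simply the combination of the last two.
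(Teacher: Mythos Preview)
Your argument is correct and matches the paper's own treatment exactly: the theorem is stated there as an immediate corollary of Proposition~\ref{prop:exp_ineq_smart} and Lemma~\ref{lem:exp_implies_lil}, and you have simply written out the verification that the hypotheses line up.
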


\section{Application: Variation inequalities for smartingales}
\label{sec:appl}
In this section we apply Theorem~\ref{thm:lil_smartingale} to measures 
$\tilde{\mathbb P}$ constructed in Section~\ref{sec:tilde_measure} in order 
to deduce variation inequalities for smartingales $(f_n)$.
To this end we first prove that those measures $\tilde{\mathbb P}$ actually 
satisfy the decay inequality \eqref{eq:diam_ineq} for fat chains, which is needed 
for Theorem~\ref{thm:lil_smartingale}.

\begin{lem}\label{lem:diam_tilde}
	Assume that the siblings in $\mathscr A$ have comparable (Lebesgue) measure 
	and let $(f_n)$ be a smartingale. 

	Then, if the fat chains of atoms are constructed using
	the measure $\mu = \tilde{\mathbb P}$ for sufficiently 
	small parameter $\lambda > 0$, inequality~\eqref{eq:diam_ineq}
	is satisfied.
\end{lem}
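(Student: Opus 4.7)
The plan is to reduce inequality \eqref{eq:diam_ineq} to a geometric decay statement about the diameters of the largest atoms $L_j$ of consecutive fat chains, exploiting the two-sided measure comparison between $\tilde{\mathbb P}$ and Lebesgue measure provided by \eqref{eq:measure_ineq_dim_gtr_1}, together with the dimensional relation $|A| \simeq (\diam A)^d$ and sibling comparability that are standing assumptions.

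First I would show that within each fat chain $\mathscr X_j$ built with respect to $\tilde{\mathbb P}$, the smallest and largest atoms have comparable Lebesgue diameter. By definition of a fat chain we have $\tilde{\mathbb P}(X_j) \geq \tilde{\mathbb P}(L_j)/2$, and since $X_j \subset L_j$, the right-hand inequality of \eqref{eq:measure_ineq_dim_gtr_1} gives
\[
\frac{1}{2} \leq \frac{\tilde{\mathbb P}(X_j)}{\tilde{\mathbb P}(L_j)} \leq \Big(\frac{|X_j|}{|L_j|}\Big)^{1-c\lambda},
\]
so $|X_j| \simeq |L_j|$ and hence $\diam X_j \simeq \diam L_j$ uniformly in $j$ (with constants that blow up only when $c\lambda \uparrow 1$).

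Second I would extract the geometric decay of $\diam L_j$. The construction in Lemma~\ref{lem:fat_chains} applied to $\tilde{\mathbb P}$ forces $\tilde{\mathbb P}(L_{j+1}) < \tfrac{1}{2}\tilde{\mathbb P}(L_j)$, and iterating gives $\tilde{\mathbb P}(L_\ell) < 2^{j-\ell} \tilde{\mathbb P}(L_j)$ for $j \leq \ell$. Because $L_\ell \subset L_j$, the left-hand inequality of \eqref{eq:measure_ineq_dim_gtr_1} yields
\[
\Big(\frac{|L_\ell|}{|L_j|}\Big)^{1+c\lambda} \leq \frac{\tilde{\mathbb P}(L_\ell)}{\tilde{\mathbb P}(L_j)} < 2^{j-\ell},
\]
and upon applying $|A| \simeq (\diam A)^d$ we obtain $\diam L_\ell / \diam L_j \lesssim 2^{(j-\ell)/(d(1+c\lambda))}$.

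Putting the two estimates together,
\[
\diam(L_\ell) \sum_{j \leq \ell} (\diam X_j)^{-1} \simeq \sum_{j \leq \ell} \frac{\diam L_\ell}{\diam L_j} \lesssim \sum_{k=0}^{\ell-1} 2^{-k/(d(1+c\lambda))},
\]
which is a convergent geometric series with a sum depending only on $d, c, \lambda$, yielding \eqref{eq:diam_ineq}. I do not anticipate a serious obstacle here: the argument is essentially bookkeeping of measure ratios through \eqref{eq:measure_ineq_dim_gtr_1}, and the only place requiring care is to ensure $\lambda$ is small enough for the exponent $1/(d(1+c\lambda))$ to stay strictly positive and for the measure $\tilde{\mathbb P}$ itself to be well defined via Theorem~\ref{thm:tilde_P}.
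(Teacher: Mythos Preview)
Your proposal is correct and follows essentially the same route as the paper: both arguments use the two-sided bound \eqref{eq:measure_ineq_dim_gtr_1} together with $|A|\simeq(\diam A)^d$ to convert the $\tilde{\mathbb P}$-fat-chain properties $\tilde{\mathbb P}(X_j)\geq \tfrac12\tilde{\mathbb P}(L_j)$ and $\tilde{\mathbb P}(L_{j+1})<\tfrac12\tilde{\mathbb P}(L_j)$ into $\diam X_j\simeq\diam L_j$ and the geometric decay $\diam L_\ell/\diam L_j\lesssim 2^{(j-\ell)/(d(1+c\lambda))}$, from which \eqref{eq:diam_ineq} follows by summing.
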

\begin{proof}
By Theorem~\ref{thm:tilde_P} we have for each pair of atoms $B\subset A$ the inequality 
\[
\Big(\frac{\diam B}{\diam A}\Big)^{d(1+c\lambda)}\lesssim \Big(\frac{| B|}{|A|}\Big)^{1+c\lambda}	
\leq\frac{\tilde{\mathbb P}(B)}{\tilde{\mathbb P}(A)} 
\leq  \Big(\frac{|B|}{|A|}\Big)^{1-c\lambda} \lesssim \Big(\frac{\diam B}{\diam A}\Big)^{d(1-c\lambda)}
\]
for some constant $c>0$. The decomposition $(\mathscr X_j)$ of a full chain $\mathscr X$ 
into fat chains as in Lemma~\ref{lem:fat_chains} with respect to the measure $\mu = \tilde{\mathbb P}$
by construction satisfies 
\[
	\frac{\tilde{\mathbb P}(L_{j+1})}{\tilde{\mathbb P}(L_j)} \leq \frac{1}{2},
	\qquad \tilde{\mathbb P}(L_j) \leq 2 \tilde{\mathbb P}(X_j),
\]
denoting by $L_j$ and $X_j$ the largest and smallest atoms of the chain $\mathscr X_j$,
respectively.
Those inequalities give for $j\leq \ell$ 
\[
 \frac{\diam L_\ell}{\diam L_j}\lesssim 2^{(j-\ell)/(d(1+c\lambda))}, \qquad 
 \frac{\diam L_j}{\diam X_j}	\lesssim 2^{1/(d(1-c\lambda))},
\]
which in turn imply
\[
\sum_{j\leq \ell} (\diam X_j)^{-1} \lesssim \sum_{j\leq \ell} (\diam L_j)^{-1} \lesssim (\diam L_\ell)^{-1},
\]
and inequality \eqref{eq:diam_ineq} for the measure $\tilde{\mathbb P}$ is proved.
\end{proof}

Recall that $\ell$ is a non-negative integer that depends only on the space $S$ 
and the atoms $\mathscr A$. To be precise, $\ell$ is the smallest integer such that 
the assertion of Lemma~\ref{lem:existence_ell} is true and such that $2^{\ell+1} \geq \dim S + \dim S^2$.
\begin{thm}\label{thm:application}
	Let $(f_n)$ be an $\ell$-sparse smartingale with respect to Lebesgue measure 
	having uniformly bounded differences $|\Delta f_n| \leq L$.  Moreover,
	assume that the siblings in $\mathscr A$ have comparable measure.

	Then, the following assertions are true: 
	\begin{enumerate}[(i)]
		\item \label{it:infty} If $\sum_k \mathbb E_k |\Delta f_k| = \infty$, we have  
		\begin{equation*}
		\liminf_{n\to\infty} \frac{f_n}{\sum_{k\leq n} \mathbb E_k |\Delta f_k|} >0
		\end{equation*}
		on  a set of Hausdorff dimension $d$.
		\item \label{it:finite} If $(f_n)$ is uniformly bounded, we have $\sum_k \mathbb E_k |\Delta f_k| < \infty$
			on a set of Hausdorff dimension $d$.
	\end{enumerate}
\end{thm}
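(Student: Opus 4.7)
My approach is to combine the change of measure from Theorem~\ref{thm:tilde_P} with the LIL-type estimate of Theorem~\ref{thm:lil_smartingale}. Fix a small $\lambda > 0$ and, writing $A_k$ for the atom of $\mathscr A_k$ containing the point, take $p_k := g\cdot \|\Delta f_k\|_{A_k}$, where $g\in S$ is the distinguished function with $c_3\leq g\leq 1$. Then $p_k\in S_k$ is non-negative and uniformly bounded, satisfies $\mathbb E_k p_k\simeq \mathbb E_k|\Delta f_k|$, and enjoys the pointwise bound $p_k\simeq \mathbb E_k|\Delta f_k|$ by virtue of \eqref{eq:L1Linfty}; writing $\sigma_n := \sum_{k\leq n}\mathbb E_k|\Delta f_k|$, one gets $\sum_{k\leq n}p_k\simeq \sigma_n$ pointwise.

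Introduce the two modified sequences $\tilde f_n^\pm := f_n\pm \lambda\sum_{k\leq n} p_k$. By Theorem~\ref{thm:tilde_P} applied with $\pm\lambda$ (the density construction of Section~\ref{sec:tilde_measure} is symmetric in the sign of $\lambda$ for small $|\lambda|$), there exist probability measures $\tilde{\mathbb P}_\lambda^\pm$ under which $(\tilde f_n^\pm)$ are smartingales obeying \eqref{eq:measure_ineq_dim_gtr_1}. The remark following Theorem~\ref{thm:tilde_P} and Lemma~\ref{lem:diam_tilde} yield \eqref{eq:L1Linfty} and \eqref{eq:diam_ineq} for $\tilde{\mathbb P}_\lambda^\pm$, so Theorem~\ref{thm:lil_smartingale} is available for $\pm\tilde f_n^\pm$ under $\tilde{\mathbb P}_\lambda^\pm$. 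Since $|\Delta\tilde f_n^\pm|$ is bounded, the $L^1$-$L^\infty$ equivalence under $\tilde{\mathbb P}_\lambda^\pm$ gives $\tilde S_n^2\lesssim \sigma_n$ for the corresponding smartingale square function. Hence on $\{\tilde S^2=\infty\}$ the LIL provides $|\tilde f_n^\pm|\lesssim \sqrt{\sigma_n\log\log\sigma_n}$, while on $\{\tilde S^2<\infty\}$ the exponential tail bound of Proposition~\ref{prop:exp_ineq_smart} (applied to $\pm\tilde f_n^\pm$) combined with Borel--Cantelli over dyadic thresholds $a\to\infty$ shows that $\tilde f_n^\pm$ stays bounded $\tilde{\mathbb P}_\lambda^\pm$-a.s. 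In both regimes $\tilde f_n^\pm=o(\sigma_n)$ on $\{\sigma_\infty=\infty\}$, $\tilde{\mathbb P}_\lambda^\pm$-a.s.

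For \eqref{it:infty}, use the $-$ version: $f_n=\tilde f_n^-+\lambda\sum p_k\geq -o(\sigma_n)+c\lambda\sigma_n$, whence $\liminf_n f_n/\sigma_n>0$ on $\{\sigma_\infty=\infty\}$, $\tilde{\mathbb P}_\lambda^-$-a.s. From \eqref{eq:measure_ineq_dim_gtr_1} and $|B|\simeq(\diam B)^d$ one gets $\tilde{\mathbb P}_\lambda^-(B)\lesssim (\diam B)^{d(1-c\lambda)}$ for atoms $B$ (and, by a covering argument using atoms of comparable diameter, for all balls), and the mass distribution principle then forces every set of positive $\tilde{\mathbb P}_\lambda^-$-measure to have Hausdorff dimension $\geq d(1-c\lambda)$. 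Letting $\lambda\to 0^+$ proves \eqref{it:infty}. For \eqref{it:finite}, use the $+$ version: $\lambda\sum p_k=\tilde f_n^+-f_n$, with $\tilde f_n^+=o(\sigma_n)$ $\tilde{\mathbb P}_\lambda^+$-a.s.\ on $\{\sigma_\infty=\infty\}$ and $(f_n)$ bounded, so $\lambda\sigma_n\lesssim \lambda\sum p_k=o(\sigma_n)+O(1)$ there, an impossibility once $\sigma_n\to\infty$. Hence $\tilde{\mathbb P}_\lambda^+(\{\sigma_\infty<\infty\})=1$, and the same mass distribution argument concludes \eqref{it:finite}.

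The main obstacle will be handling the regime $\{\tilde S^2<\infty\}$, on which Theorem~\ref{thm:lil_smartingale} is silent; here I exploit Proposition~\ref{prop:exp_ineq_smart} together with a routine Borel--Cantelli argument to get that $\tilde f_n^\pm$ is uniformly bounded $\tilde{\mathbb P}_\lambda^\pm$-a.s. A secondary but important point is to verify that the construction of Section~\ref{sec:tilde_measure} produces a legitimate probability measure when $\lambda$ is replaced by $-\lambda$ (i.e., that the densities remain positive and the estimates \eqref{eq:measure_ineq_dim_gtr_1} persist for small $|\lambda|$), which is needed to run the $+$ version in part \eqref{it:finite}.
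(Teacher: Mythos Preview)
Your approach is correct and follows the same overall strategy as the paper: construct the tilted measure from Theorem~\ref{thm:tilde_P}, apply the LIL-type bound of Theorem~\ref{thm:lil_smartingale}, and finish with the Frostman/mass-distribution argument to pass from positive $\tilde{\mathbb P}$-measure to Hausdorff dimension $d(1-c\lambda)$. Two points deserve comment, though neither is a gap.

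First, your handling of the set $\{\tilde S^2<\infty\}$ via Proposition~\ref{prop:exp_ineq_smart} and a limit $a\to\infty$ is a valid, self-contained alternative: for each fixed $b$ one has $\{\tilde S^2\le b,\ \sup_n\tilde f_n=\infty\}\subseteq\{\tilde S^2_{\tau_a}\le b,\ \tau_a<\infty\}$ for every $a$, and letting $a\to\infty$ forces measure zero. The paper instead invokes the known fact that a smartingale converges a.s.\ on $\{\tilde S^2<\infty\}$ and uses the finiteness of the limit directly; this is shorter if one is willing to import that convergence result, whereas your argument stays within the estimates proved in Section~\ref{sec:lil_smart}.

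Second, for item~\eqref{it:finite} you introduce a separate ``$+$'' measure $\tilde{\mathbb P}_\lambda^+$ for the sequence $\tilde f_n^+=f_n+\lambda\sum p_k$. This works (the density perturbation in the proof of Theorem~\ref{thm:tilde_P} depends only on $|\lambda|$, so positivity and \eqref{eq:measure_ineq_dim_gtr_1} persist), but it is unnecessary. The paper simply reuses the ``$-$'' construction: inequalities~\eqref{eq:vargtrzero} and~\eqref{eq:vargtrzero_square_finite} already show $\liminf_n f_n/\sigma_n>0$ $\tilde{\mathbb P}$-a.s.\ on $\{\sigma_\infty=\infty\}$, which is incompatible with $(f_n)$ being bounded; hence $\tilde{\mathbb P}(\{\sigma_\infty=\infty\})=0$, and the same Hausdorff-dimension argument applies. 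This saves you the (admittedly routine) verification that Section~\ref{sec:tilde_measure} goes through with the opposite sign.
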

We remark that on every atom $B\in\mathscr A_k$ we have $\mathbb E_k |\Delta f_k| \simeq  \| \Delta f_k\|_B$.
\begin{proof}
	Let $\lambda >0$ be sufficiently small. As in equation \eqref{eq:def_f_tilde}, 
	define the sequence of functions $(\tilde{f_n})$ by its differences 
	\[
		\Delta \tilde{f_n} = \Delta f_n - \lambda p_n
	\]
	for some non-negative function $p_n\in S_n$ satisfying 
	that there exists a constant $C$ such that for all $x\in\Omega$ we have
	$C^{-1} \mathbb E_n |\Delta f_n|(x) \leq \mathbb E_n p_n(x) \leq C \mathbb E_n |\Delta f_n|(x)$.
	Let $\tilde{\mathbb P}$ be the probability measure that results from Theorem~\ref{thm:tilde_P}
	such that $(\tilde{f}_n)$ is a smartingale with respect to it.
	Denote the square functions 
	\[
		S_n^2 = \sum_{k\leq n} \mathbb E_{k}|\Delta f_k|^2,\qquad	
		\tilde{S}_n^2 = \sum_{k\leq n} \tilde{\mathbb E}_{k}|\Delta \tilde{f}_k|^2,
	\]	
	where $\mathbb E_n$ denotes the conditional expectation with respect to Lebesgue measure 
	and $\tilde{\mathbb E}_n$ denotes the conditional expectation with 
	respect to the measure $\tilde{\mathbb P}$. Note that since the siblings in $\mathscr A$ 
	have comparable measure, inequality \eqref{eq:remez} implies that those square functions
	are comparable to the smartingale square functions used on the right hand side of \eqref{eq:square_comp}.
	Moreover, 	if $\lambda>0$ is sufficiently small, the inequalities \eqref{eq:measure_ineq_dim_gtr_1}
	relating the measures $|\cdot| = \mathbb P$ and $\tilde{\mathbb P}$, together with 
	Remez' inequality \eqref{eq:remez},
	yield that $S_n^2 \simeq \tilde{S}_n^2$.

We start by proving item \eqref{it:infty} and assume that $\sum_{k} \mathbb E_k |\Delta f_k| = \infty$.

\textsc{Case I: $\tilde{\mathbb P}(S^2 = \infty) > 0$: }	

	Since inequality \eqref{eq:diam_ineq} is satisfied 
	for the measure $\tilde{\mathbb P}$ by Lemma~\ref{lem:diam_tilde}, we apply Theorem~\ref{thm:lil_smartingale}
	to the smartingale $(-\tilde{f}_n)$ with respect to the measure $\tilde{\mathbb P}$
	to deduce, for some constant $r>0$,
	\begin{equation}\label{eq:lil_appl}
		\liminf_{n\to\infty} \frac{\tilde{f}_n}{\sqrt{r \tilde{S}_n^2 \log\log \tilde{S}_n^2}}	\geq -1,\qquad 
		\text{$\tilde{\mathbb P}$-almost surely on $\{\tilde S^2 = \infty\}$}.
	\end{equation}
 Therefore, the definition of $\tilde{f}_n$ gives 
	\[
		\frac{f_n}{\sum_{k\leq n} \mathbb E_k |\Delta f_k|} \geq C^{-1}\lambda - c \frac{\sqrt{ S_n^2 \log\log S_n^2}}{\sum_{k\leq n} \mathbb E_k|\Delta f_k|},\qquad \text{as }n\to\infty,
	\]
	$\tilde{\mathbb P}$-almost surely on $\{ S^2  =\infty\}$, for some constants $c,C>0$.
	Since $|\Delta f_k| \leq L$, the rightmost term tends to zero and we obtain 
	\begin{equation}\label{eq:vargtrzero}
		\liminf_{n\to\infty }\frac{f_n}{\sum_{k\leq n} \mathbb E_k |\Delta f_k|} >0,\qquad \tilde{\mathbb P}\text{-almost surely on $\{S^2 = \infty\}$.}
	\end{equation}

	\textsc{Case II: $\tilde{\mathbb P}(S^2 = \infty) = 0$: }

	As for martingales, we also have for smartingales on $\{ S^2 < \infty\}$ that the pointwise 
	limit of $(\tilde{f}_n)$ exists and is finite $\tilde{\mathbb P}$-almost surely. Therefore, in 
	particular, the limit $\tilde{f}$ of $\tilde{f}_n = f_n - \lambda \sum_{k\leq n} p_k$ exists and is finite $\tilde{\mathbb P}$-a.s.
	on the set $\{ S^2 < \infty \}$. 
	By the pointwise comparability of $p_k$ and $\mathbb E_k|\Delta f_k|$, the finiteness of $\tilde f$,
	and $\sum_k \mathbb E_k|\Delta f_k| = \infty$, we obtain 
	\begin{equation}\label{eq:vargtrzero_square_finite}
		\lim_{n\to\infty} \frac{f_n}{\sum_{k\leq n} \mathbb E_k|\Delta f_k|} = \lambda >0,\qquad \tilde{\mathbb P}\text{-a.s. on $\{S^2 < \infty\}$}.
	\end{equation}

	Summarizing we have that $\liminf_{n\to\infty} f_n / \sum_{k\leq n} \mathbb E_k|\Delta f_k| >0$ 
	on a set of full $\tilde{\mathbb P}$ measure.	
	Recall that the atoms are convex subsets of $\mathbb R^d$. Since the siblings 
	in $\mathscr A$ have comparable measure and $|A| \simeq (\diam A)^d$ for each atom $A$,
	we can cover each ball $B$ by a uniformly finite number of disjoint atoms $A_1,\ldots, A_K$ whose diameters
	are comparable to the diameter of $B$. Then, inequality~\eqref{eq:measure_ineq_dim_gtr_1},
	applied to each atom $A_j$ as a subset of $\Omega$,  gives the inequality
	\[
		\tilde{\mathbb P}(B) \leq  \sum_{j=1}^K \tilde{\mathbb P}(A_j)	\lesssim \sum_{j=1}^K 
		|A_j|^{(1-c\lambda)}  \simeq \sum_{j=1}^K (\diam A_j)^{d(1-c\lambda)} \lesssim (\diam B)^{d(1-c\lambda)}
	\]
	for balls $B$. As a consequence, denoting $E = \{ \liminf_n f_n / \sum_{k\leq n} \mathbb E_k|\Delta f_k| >0\}$
	and by $\mathscr H^s$ the $s$-dimensional Hausdorff measure,
	\[
		\mathscr H^{d(1-c\lambda)}	(E) \gtrsim \tilde{\mathbb P}(E) >0,
	\]
	Letting $\lambda \to 0$ yields $\liminf_{n\to\infty} f_n / \sum_{k\leq n} \mathbb E_k |\Delta f_k| > 0$
	 on a set of Hausdorff dimension $d$, which proves item \eqref{it:infty}.

	In order to prove item \eqref{it:finite}, we note that assuming the uniform boundedness 
	of the smartingale $(f_n)$ and $\sum_k \mathbb E_k |\Delta f_k| = \infty$ gives
	a contradiction by the inequalities \eqref{eq:vargtrzero} and \eqref{eq:vargtrzero_square_finite}
	on a set of full $\tilde{\mathbb P}$ measure for each sufficiently small $\lambda >0$.
	As above, this gives $\sum_k |\Delta f_k| < \infty$ on a set of Hausdorff dimension $d$.
\end{proof}

\subsection*{Acknowledgments} 
The author is supported by the Austrian Science Fund FWF, project P34414.

\bibliographystyle{plain}
\bibliography{lil_splines}

\end{document}